\documentclass[a4paper]{amsart}
\usepackage{graphicx,amsmath,amsfonts,latexsym,amssymb,amsthm,mathrsfs, color}
\usepackage[latin1]{inputenc}
\evensidemargin0cm
\oddsidemargin0cm
\textheight22.7cm
\textwidth15cm

\renewcommand{\Re}{\mathop{\rm Re}}

\newtheorem{thm}{Theorem}[section]

\newtheorem{corollary}[thm]{Corollary}
\newtheorem{definition}[thm]{Definition}
\newtheorem{theorem}[thm]{Theorem}
\newtheorem{lemma}[thm]{Lemma}
\newtheorem{remark}[thm]{Remark}

\begin{document}
\title[]%
{Complex powers for cone differential operators and the heat equation on manifolds with conical singularities}
\author{Nikolaos Roidos}
\address{Institut f\"ur Analysis, Leibniz Universit\"at Hannover, Welfengarten 1, 30167 Hannover, Germany}
\email{roidos@math.uni-hannover.de}

\subjclass[2010]{35K05; 35K65; 35R01; 46B70; 58J35}
\date{\today}

\date{\today}
\begin{abstract}
We obtain left and right continuous embeddings for the domains of the complex powers of sectorial $\mathbb{B}$-elliptic cone differential operators. We apply this result to the heat equation on manifolds with conical singularities and provide asymptotic expansions of the unique solution close to the conical points. We further show that the decomposition of the solution in terms of asymptotics spaces, i.e. finite dimensional spaces that describe the domains of the integer powers of the Laplacian and determined by the local geometry around the singularity, is preserved under the evolution.
\end{abstract}
\maketitle

\section{Introduction}

The domains of the complex powers of sectorial operators play an important role in the regularity theory of partial differential equations (PDEs). Concerning the theory of maximal regularity, under elementary embeddings (see e.g. \cite[(I.2.5.2)]{Am} and \cite[(I.2.9.6)]{Am}), they determine the interpolation spaces of Banach couples appearing in the evolution of linear and quasilinear parabolic problems (see e.g. \cite[Theorem 7.1]{Am1}, \cite[Theorem 2.7]{Ar} or \cite[Theorem 2.1]{CL}). In the case of usual elliptic differential operators, the domains of the complex powers can be determined by standard pseudodifferential theory and in general they are described by fractional Sobolev spaces. Of particular interest is the case of degenerate differential operators, where such knowledge can be applied to the study of PDEs on singular spaces. 

In this article we are interested in conically degenerate operators. This class contains the naturally appearing operators on manifolds with conical singularities. It is well known (see e.g. \cite{GKM}, \cite{Le}, \cite{Sh} or \cite{Schu}) that a cone differential operator which satisfies certain ellipticity assumptions (i.e. being $\mathbb{B}$-elliptic) and is defined as an unbounded operator on an arbitrary (weighted) Mellin-Sobolev space has several closed extensions, called realizations, which differ by a finite dimensional space, called asymptotics space. The structure of the asymptotics space is standard and is determined by the coefficients of the operator close to the singularity. 

Therefore, the description under continuous embeddings of the complex power domains of sectorial $\mathbb{B}$-elliptic cone differential operators is related to the interpolation between Mellin-Sobolev spaces and direct sums of Mellin-Sobolev spaces and asymptotics spaces. We proceed to such an estimate by using only elementary interpolation theory and basic facts concerning the description of the maximal domain of a $\mathbb{B}$-elliptic cone differential operator. Then, embeddings for the domains of the complex powers are recovered by standard theory in the case of sectorial closed extension. For the property of sectoriality, we point out the associated theory and provide an example related to the Laplacian on a conic manifold. 

As a next step, we apply the above result to the theory of degenerate parabolic PDEs. In order to emphasize concrete results, we deal only with linear theory. More precisely, we consider the heat equation on a manifold with conical singularities. However, our complex power result can be applied to non-linear problems on such spaces, such as e.g. the porous medium equation or the Cahn-Hilliard equation, and provide information concerning the asymptotic behavior of the solutions close to the singularity, as we remark later on. 

Concerning now the heat equation on a conic manifold, instead of choosing as usual an appropriate realization of the Laplacian on a Mellin-Sobolev space as e.g. in \cite{Be}, \cite{CSS2}, \cite{RS2} and \cite{Sh}, we pick up a closed extension by going arbitrarily high to the power scale of a specific $R$-sectorial realization of the Laplacian. In this way, by using standard maximal $L^q$-regularity theory, we show well posedness of the problem on spaces consisting of sums of Mellin-Sobolev spaces and asymptotics spaces. 

As a consequence, it is shown that an appropriate decomposition of the initial data in terms of a Mellin-Sobolev space part and an asymptotics space part is preserved under the evolution. Furthermore, the asymptotics space expansion of the solution can become arbitrary long, and hence can provide arbitrarily sharp information concerning the asymptotic behavior of the solution close to the conical tips, depending on the regularity of the right hand side of the heat equation. Hence, e.g. in the case of the homogeneous problem, the complete asymptotic expansion of the solution close to the singularity is recovered, which turns out to be dependent on the local geometry of the cone. 

\subsection*{Acknowledgment} We thank Elmar Schrohe for helpful discussions concerning Theorem \ref{interembed}. 

\section{Basic Maximal $L^{q}$-regularity Theory for Linear Parabolic Problems}

Let $X_{1}\overset{d}{\hookrightarrow} X_{0}$ be a continuously and densely injected complex Banach couple. We start with a basic decay property of the resolvent of an operator which allows us to define complex powers. 

\begin{definition}[Sectorial operator]
Let $\theta\in[0,\pi)$ and denote by $\mathcal{P}(\theta)$ the class of all closed densely defined linear operators $A$ in $X_{0}$ such that 
$$
S_{\theta}=\{\lambda\in\mathbb{C}\,|\, |\arg \lambda|\leq\theta\}\cup\{0\}\subset\rho{(-A)} \quad \mbox{and} \quad (1+|\lambda|)\|(A+\lambda)^{-1}\|_{\mathcal{L}(X_{0})}\leq K_{A,\theta}, \quad \lambda\in S_{\theta},
$$
for some $K_{A,\theta}\geq1$ that is called {\em sectorial bound} of $A$ and depends on $A$ and $\theta$. The elements in $\mathcal{P}(\theta)$ are called {\em (invertible) sectorial operators of angle $\theta$}.
\end{definition}

If $A\in\mathcal{P}(0)$, then $A\in \mathcal{P}(\psi)$ for certain $\psi\in(0,\pi)$, as we can see from \cite[(III.4.6.4)]{Am} and \cite[(III.4.6.5)]{Am}. For any $\rho\geq0$ and $\theta\in(0,\pi)$, let the positively oriented path 
$$
\Gamma_{\rho,\theta}=\{re^{-i\theta}\in\mathbb{C}\,|\,r\geq\rho\}\cup\{\rho e^{i\phi}\in\mathbb{C}\,|\,\theta\leq\phi\leq2\pi-\theta\}\cup\{re^{+i\theta}\in\mathbb{C}\,|\,r\geq\rho\}.
$$
Given any $A\in \mathcal{P}(\theta)$, $\theta\in(0,\pi)$, we can define its complex powers $A^z$ for $\mathrm{Re}(z)<0$ by using the Dunford integral formula, namely
$$
A^{z}=\frac{1}{2\pi i}\int_{\Gamma_{\rho,\theta}}(-\lambda)^{z}(A+\lambda)^{-1}d\lambda \in \mathcal{L}(X_{0}),
$$
for certain $\rho>0$ due to \cite[(III.4.6.4)]{Am}. The above family together with $A^{0} = I$ is a strongly continuous analytic semigroup on $X_{0}$ (see e.g. \cite[Theorem III.4.6.2]{Am} and \cite[Theorem III.4.6.5]{Am}). Moreover, it is easy to verify that each $A^{z}$ is injective for $\mathrm{Re}(z)<0$. Then, we define $A^{w}=(A^{-w})^{-1}$ for $\mathrm{Re}(w)>0$, which are in general unbounded operators (see e.g. \cite[(III.4.6.12)]{Am}). Finally, the imaginary powers of $A$ are defined in a similar way (see e.g. \cite[(III.4.6.21)]{Am}). We refer to \cite[Section III.4.6]{Am} for further details on the complex powers of sectorial operators. 

Recall next the notion of $R$-sectoriality, a boundedness condition for the resolvent of an operator stronger than the standard sectoriality, which plays a fundamental role in the theory of maximal $L^q$-regularity. 
\begin{definition}
Let $\{\epsilon_{k}\}_{k=1}^{\infty}$ be the sequence of the Rademacher functions and $\theta\in[0,\pi)$. An operator $A\in\mathcal{P}(\theta)$ is called {\em $R$-sectorial of angle $\theta$}, if for any choice of $\lambda_{1},...,\lambda_{N}\in S_{\theta}\backslash\{0\}$ and $x_{1},...,x_{N}\in X_0$, $N\in\mathbb{N}\backslash\{0\}$, we have that
\begin{eqnarray*}
\|\sum_{k=1}^{N}\epsilon_{k}\lambda_{k}(A+\lambda_{k})^{-1}x_{k}\|_{L^{2}(0,1;X_0)} \leq R_{A,\theta} \|\sum_{k=1}^{N}\epsilon_{k}x_{k}\|_{L^{2}(0,1;X_0)},
\end{eqnarray*}
for some constant $R_{A,\theta}\geq 1$ that is called {\em $R$-sectorial bound} of $A$ and depends on $A$ and $\theta$.
\end{definition} 

For any $q\in(1,\infty)$ and $\phi\in(0,1)$, denote by $L^{q}(0,T;X_{0})$ the $X_{0}$-valued $L^{q}$-space and by $(\cdot,\cdot)_{\phi,q}$ the real interpolation functor of exponent $\phi$ and parameter $q$. Consider the abstract linear parabolic problem
\begin{eqnarray}\label{AP}
u'(t)+Au(t)&=&f(t), \quad t\in(0,T),\\\label{AP2}
u(0)&=&u_{0}
\end{eqnarray}
where $q\in(1,\infty)$, $T>0$ is finite, $f\in L^{q}(0,T;X_{0})$, $u_{0}\in (X_{1},X_{0})_{\frac{1}{q},q}$ and $-A:X_{1}\rightarrow X_{0}$ is the infinitesimal generator of a bounded analytic semigroup on $X_{0}$. The operator $A$ has {\em maximal $L^{q}$-regularity} if for some $q$ (and hence for all, according to a result by G. Dore) we have that for any $f\in L^{q}(0,T;X_{0})$ and $u_{0}\in (X_{1},X_{0})_{\frac{1}{q},q}$ there exists a unique $u\in W^{1,q}(0,T;X_{0})\cap L^{q}(0,T;X_{1})$ solving \eqref{AP}-\eqref{AP2} that depends continuously on the data $f$, $u_{0}$. Finally, recall the standard embedding of the maximal $L^{q}$-regularity space (see e.g. \cite[Theorem III.4.10.2]{Am}), namely
\begin{gather}\label{embmaxreg}
W^{1,q}(0,T;X_{0})\cap L^{q}(0,T;X_{1})\hookrightarrow C([0,T];(X_{1},X_{0})_{\frac{1}{q},q}).
\end{gather}

If we restrict to Banach spaces belonging to the class of UMD (i.e. having the unconditionality of martingale differences property, see \cite[Section III.4.4]{Am}) then $R$-sectoriality implies maximal $L^q$-regularity (it actually characterizes this property in UMD spaces due to \cite[Theorem 4.2]{W}) as we can see from the following fundamental result.
\begin{theorem}{\rm (Kalton and Weis)}\label{KW}
In a UMD Banach space any $R$-sectorial operator of angle $\theta$ with $\theta>\frac{\pi}{2}$ has maximal $L^{q}$-regularity. 
\end{theorem}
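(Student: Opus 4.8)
The plan is to reduce the assertion to the operator-valued Mikhlin--H\"ormander Fourier multiplier theorem, which is the point where the UMD hypothesis enters in an essential way. First I would treat the case of vanishing initial data $u_{0}=0$ and, extending $f\in L^{q}(0,T;X_{0})$ by zero to all of $\mathbb{R}$, reformulate the problem as the bounded invertibility of $\frac{d}{dt}+A$ on $L^{q}(\mathbb{R};X_{0})$ with domain $W^{1,q}(\mathbb{R};X_{0})\cap L^{q}(\mathbb{R};X_{1})$. Applying the Fourier transform in $t$, the candidate solution operator is the Fourier multiplier with symbol $(i\xi+A)^{-1}$, so that $f\mapsto u'$ and $f\mapsto Au$ correspond respectively to the operator-valued symbols $M(\xi)=i\xi(A+i\xi)^{-1}$ and $N(\xi)=A(A+i\xi)^{-1}=I-M(\xi)$; these are well defined because $\theta>\frac{\pi}{2}$ forces the punctured imaginary axis to lie in $S_{\theta}\setminus\{0\}\subset\rho(-A)$.

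The second step is to verify the hypotheses of the operator-valued multiplier theorem for $M$ (and likewise for $N$). Differentiating, $\xi M'(\xi)=i\xi(A+i\xi)^{-1}-\bigl(i\xi(A+i\xi)^{-1}\bigr)^{2}$, and more generally every symbol occurring is a polynomial without constant term in the single operator family $\{\mu(A+\mu)^{-1}:\mu\in i\mathbb{R}\setminus\{0\}\}$. By hypothesis $A$ is $R$-sectorial of angle $\theta>\frac{\pi}{2}$, so $\{\mu(A+\mu)^{-1}:\mu\in S_{\theta}\setminus\{0\}\}$ is $R$-bounded; restricting $\mu$ to the imaginary axis and using that finite sums and products of $R$-bounded families are again $R$-bounded, I would conclude that $\{M(\xi):\xi\neq0\}$ and $\{\xi M'(\xi):\xi\neq0\}$ are $R$-bounded, with bounds controlled by $R_{A,\theta}$. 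Invoking the operator-valued Mikhlin theorem in the UMD space $X_{0}$ then yields that $f\mapsto u'$ and $f\mapsto Au$ are bounded on $L^{q}(\mathbb{R};X_{0})$, hence $\frac{d}{dt}+A$ is boundedly invertible there; restricting the resulting $u$ to $(0,T)$ produces the unique solution of \eqref{AP}--\eqref{AP2} with $u_{0}=0$, depending continuously on $f$.

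Finally I would incorporate the initial value. Since $A\in\mathcal{P}(\theta)$ with $\theta>\frac{\pi}{2}$, $-A$ generates a bounded analytic semigroup $e^{-tA}$, and standard trace theory (e.g.\ \cite[Section III.4.10]{Am}) shows that $t\mapsto e^{-tA}u_{0}$ lies in $W^{1,q}(0,T;X_{0})\cap L^{q}(0,T;X_{1})$ precisely when $u_{0}\in(X_{1},X_{0})_{\frac{1}{q},q}$, solving the homogeneous equation with that datum; adding this to the particular solution constructed above, with \eqref{embmaxreg} guaranteeing the trace makes sense, handles general $(f,u_{0})$, and uniqueness is inherited from the line. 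The genuinely hard ingredient, and the main obstacle, is the operator-valued Mikhlin multiplier theorem itself (the UMD-dependent theorem of Weis); the remaining care is bookkeeping, namely transferring between $\mathbb{R}$ and the finite interval and reconciling the sign and angle conventions so that the imaginary axis indeed sits inside the sector of $R$-sectoriality.
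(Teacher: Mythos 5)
Your argument is correct in outline, but it is a genuinely different route from the one the paper takes: the paper does not prove the theorem at all, it cites \cite[Theorem 6.5]{KW1}, i.e.\ the Kalton--Weis operator-sum theorem (one writes the problem as the sum $\frac{d}{dt}+A$ on $L^{q}(0,T;X_{0})$, uses that $\frac{d}{dt}$ admits a bounded $H^{\infty}$-calculus of angle $\frac{\pi}{2}$ on UMD-valued $L^{q}$, and closes the sum against the $R$-sectorial operator $A$ of angle $\theta>\frac{\pi}{2}$), with the nonzero initial datum handled by \cite[Theorem 2.7]{Ar}. What you sketch is instead Weis's original Fourier-multiplier proof \cite{W}: $R$-boundedness of $\{\mu(A+\mu)^{-1}:\mu\in S_{\theta}\setminus\{0\}\}$ restricted to the imaginary axis, the algebraic identity $\xi M'(\xi)=M(\xi)-M(\xi)^{2}$, stability of $R$-bounds under sums and products, and the operator-valued Mikhlin theorem in the UMD space $X_{0}$; the initial value is then absorbed by the trace-space characterization of $(X_{1},X_{0})_{\frac{1}{q},q}$ via the analytic semigroup, exactly as in \cite{Ar}. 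Both proofs hinge on UMD at the same conceptual point (vector-valued harmonic analysis), and both are standard; the sum method generalizes more readily to non-autonomous and perturbation arguments, while your multiplier method is more self-contained and makes the role of $R$-boundedness transparent. Two pieces of bookkeeping you wave at should be made explicit if you write this up: after inverting $\frac{d}{dt}+A$ on the line you need the causality/support argument (the inverse Fourier transform of $(i\xi+A)^{-1}$ is $e^{-tA}\mathbf{1}_{t\geq 0}$, so the solution vanishes for $t\leq 0$ and indeed has trace $0$ at $t=0$), and uniqueness on $(0,T)$ is not literally ``inherited from the line'' but follows from the standard mild-solution representation for the generator $-A$; also note that well-definedness of the symbol at $\xi=0$ uses that the paper's sectorial class consists of invertible operators, so $0\in\rho(-A)$.
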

\begin{proof}
This is \cite[Theorem 6.5]{KW1} for the case of $u_{0}=0$. For the general case see e.g. \cite[Theorem 2.7]{Ar}.
\end{proof}

Finally, we note that the property of maximal $L^q$-regularity is preserved on power scales of $R$-sectorial operators in UMD spaces, as we deduce from the following elementary result. 

\begin{lemma}\label{l1}
Let $X_{0}$ be a complex Banach space and let $A:\mathcal{D}(A)\rightarrow X_{0}$ be an $R$-sectorial operator of angle $\theta\in[0,\pi)$. For any $k\in\mathbb{N}\backslash\{0\}$ let 
$$
A_{k}\in \mathcal{L}(\mathcal{D}(A^{k}),\mathcal{D}(A^{k-1})) \quad \text{defined by} \quad A_{k}: x\mapsto Ax, 
$$
where 
$$
\mathcal{D}(A^{k})=(\mathcal{D}(A^{k}),\|A^{k}\cdot\|_{X_{0}})=(\{y\in\mathcal{D}(A^{k-1})\, |\,Ay\in \mathcal{D}(A^{k-1})\},\|A^{k}\cdot\|_{X_{0}}).
$$
Then, $A_{k}$ is also $R$-sectorial of angle $\theta$.
\end{lemma}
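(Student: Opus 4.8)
The plan is to realize $A_k$, up to conjugation by an isometric isomorphism, as the original operator $A$ itself, and then to invoke the elementary stability of $R$-sectoriality — with unchanged angle — under similarity by a bounded isomorphism.

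Fix $k\ge1$ and write $Y_j=\bigl(\mathcal{D}(A^j),\|A^j\,\cdot\,\|_{X_0}\bigr)$ for $j\ge0$, so that $A_k$ lives on the Banach space $Y_{k-1}$ with domain $Y_k$. Since $A\in\mathcal{P}(\theta)$ we have $0\in\rho(-A)$, so $A^{-1}\in\mathcal{L}(X_0)$, and therefore, by the very definition of the norm on $Y_j$, the map $A^j:Y_j\to X_0$ is an isometric isomorphism with inverse $A^{-j}$ for every $j\ge0$. Put $T:=A^{k-1}:Y_{k-1}\to X_0$. I would then verify the two intertwining facts: $T$ maps $\mathcal{D}(A^k)$ bijectively onto $\mathcal{D}(A)$ (using the equivalence $x\in\mathcal{D}(A^k)\Leftrightarrow A^{k-1}x\in\mathcal{D}(A)$), and $T A_k T^{-1}=A$ as operators in $X_0$ with domain $\mathcal{D}(A)$ (the identity $A^{k-1}A\,A^{-(k-1)}=A$). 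This is the computational heart of the proof; it amounts to bookkeeping with the commuting powers $A^iA^{-j}=A^{i-j}$ on their appropriate domains, and is the only place where one must be careful.

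Granting $A_k=T^{-1}AT$ with $T$ isometric, the rest is routine. For $\lambda\in S_\theta$ one has $A_k+\lambda=T^{-1}(A+\lambda)T$, hence $\lambda\in\rho(-A_k)$ with $(A_k+\lambda)^{-1}=T^{-1}(A+\lambda)^{-1}T\in\mathcal{L}(Y_{k-1})$; consequently $A_k$ is closed, its domain $T^{-1}\mathcal{D}(A)$ is dense in $Y_{k-1}$ (as $T^{-1}$ is a homeomorphism and $\mathcal{D}(A)$ is dense in $X_0$), and the estimate $(1+|\lambda|)\|(A_k+\lambda)^{-1}\|_{\mathcal{L}(Y_{k-1})}\le K_{A,\theta}$ holds because $T$ and $T^{-1}$ are isometric; thus $A_k\in\mathcal{P}(\theta)$. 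For the Rademacher condition, given $\lambda_1,\dots,\lambda_N\in S_\theta\setminus\{0\}$ and $x_1,\dots,x_N\in Y_{k-1}$ I would write
\[
\sum_{j=1}^{N}\epsilon_j\lambda_j(A_k+\lambda_j)^{-1}x_j=T^{-1}\Bigl(\sum_{j=1}^{N}\epsilon_j\lambda_j(A+\lambda_j)^{-1}(Tx_j)\Bigr)
\]
and estimate in $L^2(0,1;Y_{k-1})$ using $\|T^{-1}\|=1$, then the $R$-sectoriality of $A$ applied to the vectors $Tx_j\in X_0$, then $\|T\|=1$; this yields the Rademacher inequality for $A_k$ with $R$-bound $R_{A,\theta}$. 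Hence $A_k$ is $R$-sectorial of angle $\theta$.

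The one genuine obstacle is the bookkeeping behind $T A_kT^{-1}=A$: establishing that $A^{k-1}$ carries $\mathcal{D}(A^k)$ bijectively onto $\mathcal{D}(A)$ and intertwines $A_k$ with $A$, while keeping track of the domain on which each identity $A^iA^{-j}=A^{i-j}$ is being invoked. Everything downstream of the similarity relation is formal.
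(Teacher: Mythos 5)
Your proposal is correct and is essentially the paper's own argument: the paper also exploits that $A^{k-1}$ is an isometry from $\bigl(\mathcal{D}(A^{k-1}),\|A^{k-1}\cdot\|_{X_0}\bigr)$ onto $X_0$ commuting with the resolvents, only it carries this out inline via $(A_{k}+\lambda)^{-1}=(A+\lambda)^{-1}|_{\mathcal{D}(A^{k-1})}$ and the norm identity $\|u\|_{\mathcal{D}(A^{k-1})}=\|A^{k-1}u\|_{X_0}$, rather than packaging it as the similarity $A_k=T^{-1}AT$. Your version just makes the conjugation explicit and additionally records the sectoriality, closedness and density statements the paper dismisses as easy.
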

\begin{proof}
It is easy to see that $\rho(-A)\subseteq\rho(-A_{k})$ and that $(A+\lambda)^{-1}|_{\mathcal{D}(A^{k-1})}=(A_{k}+\lambda)^{-1}$ for all $\lambda\in\rho(-A)$. Let $\lambda_{1},...,\lambda_{N}\in S_{\theta}\backslash\{0\}$ and $x_{1},...,x_{N}\in X_{0}$, $N\in\mathbb{N}\backslash\{0\}$. By the $R$-sectoriality of $A_{1}$ we have that
\begin{eqnarray*}
\lefteqn{\|\sum_{i=1}^{N}\epsilon_{i}\lambda_{i}(A_{k}+\lambda_{i})^{-1}x_{i}\|_{L^{2}(0,1;\mathcal{D}(A^{k-1}))}}\\
&=& \|\sum_{i=1}^{N}\epsilon_{i}\lambda_{i}(A+\lambda_{i})^{-1}A^{k-1}x_{i}\|_{L^{2}(0,1;X_{0})}\\
&\leq& C\|\sum_{i=1}^{N}\epsilon_{i}A^{k-1}x_{i}\|_{L^{2}(0,1;X_{0})}\\
&=& C\|\sum_{i=1}^{N}\epsilon_{i}x_{i}\|_{L^{2}(0,1;\mathcal{D}(A^{k-1}))}
\end{eqnarray*}
for some constant $C\geq1$.
\end{proof}

\section{Complex Powers for Cone Differential Operators}

Let $\mathcal{B}$ be a smooth $(n+1)$-dimensional, $n\geq1$, manifold with possibly disconnected closed (i.e. compact without boundary) boundary $\partial\mathcal{B}$. Endow $\mathcal{B}$ with a Riemannian metric $g$ which in a collar neighborhood $[0,1)\times\partial\mathcal{B}$ of the boundary admits the form 
\begin{eqnarray}\label{metre2}
g=dx^2+ x^2 h(x,y),
\end{eqnarray}
where $(x,y)\in [0,1)\times\partial \mathcal{B}$ are local coordinates and $x\mapsto h(x,y)$ is a smooth up to $x=0$ family of Riemannian metrics on the cross section $\partial\mathcal{B}$ that does not degenerate up to $x=0$. We call $\mathbb{B}=(\mathcal{B},g)$ {\em conic manifold} or {\em manifold with conical singularities} which are identified with the set $\{0\}\times \partial\mathcal{B}$. When $h$ is independent of $x$ we have straight conical tips, otherwise the tips are warped. Finally, let $\partial\mathcal{B}=\cup_{i=1}^{k_{\mathcal{B}}}\partial\mathcal{B}_{i}$, for certain $k_{\mathcal{B}}\in\mathbb{N}\backslash\{0\}$, where $\partial\mathcal{B}_{i}$ are smooth, closed and connected.

The naturally appearing differential operators on $\mathbb{B}$ degenerate and belong to the class of {\em cone differential operators} or {\em conically degenerate operators}. A cone differential operator $A$ of order $\mu\in\mathbb{N}$ is an $\mu$-th order differential operator with smooth coefficients in the interior $\mathbb{B}^{\circ}$ of $\mathbb{B}$ such that when it is restricted on the collar part $(0,1)\times\partial\mathcal{B}$ it admits the following form 
\begin{gather}\label{A}
A=x^{-\mu}\sum_{k=0}^{\mu}a_{k}(x)(-x\partial_{x})^{k}, \quad \mbox{where} \quad a_{k}\in C^{\infty}([0,1);\mathrm{Diff}^{\mu-k}(\partial\mathcal{B})).
\end{gather}
If $a_{k}$, $k\in\{0,...,\mu\}$, do not depend on $x$ close to zero we say that $A$ has {\em $x$-independent coefficients}.

We associate two special symbols to a cone differential operator. If $(\xi,\eta)$ are the corresponding covariables to the local coordinates $(x,y)\in [0,1)\times \partial\mathcal{B}$ near the boundary, then we define the {\em rescaled symbol} by 
$$
\widetilde{\sigma}_{\psi}^{\mu}(A)(y,\eta,\xi)=\sum_{k=0}^{\mu}\sigma_{\psi}^{\mu-k}(a_{k})(0,y,\eta)(-i\xi)^{k}\in C^{\infty}((T^{\ast}\partial\mathcal{B}\times\mathbb{R})\backslash\{0\}).
$$
Furthermore, the following holomorphic family of differential operators defined on the boundary
$$
\sigma_{M}^{\mu}(A)(\lambda)=\sum_{k=0}^{\mu}a_{k}(0)\lambda^{k} : \mathbb{C} \rightarrow \mathcal{L}(H_{p}^{s}(\partial\mathbb{B}),H_{p}^{s-\mu}(\partial\mathbb{B}))
$$
is called {\em conormal symbol} of $A$, where $\partial\mathbb{B}=(\partial\mathcal{B},h(0))$, $s\in\mathbb{R}$, $p\in(1,\infty)$ and $H_{p}^{s}(\partial\mathbb{B})$ denotes the usual Sobolev space. We may then extend the notion of ellipticity to the case of conically degenerate differential operators as follows. 

\begin{definition}
A cone differential operator $A$ is called {\em $\mathbb{B}$-elliptic} if its standard principal pseudodifferential symbol $\sigma_{\psi}^{\mu}(A)$ is invertible on $T^{\ast}\mathcal{B}^{\circ}\backslash\{0\}$ and $\widetilde{\sigma}_{\psi}^{\mu}(A)$ is also pointwise invertible.
\end{definition}

Cone differential operators act naturally on scales of weighted {\em Mellin-Sobolev spaces} $\mathcal{H}_{p}^{s,\gamma}(\mathbb{B})$, $s,\gamma\in\mathbb{R}$, $p\in(1,\infty)$. Let $\omega\in C^{\infty}(\mathbb{B})$ be a fixed cut-off function near the boundary, i.e. a smooth non-negative function on $\mathcal{B}$ with $\omega=1$ near $\{0\}\times\partial \mathcal{B}$ and $\omega=0$ on $\mathcal{B}\backslash([0,1)\times \partial \mathcal{B})$. Moreover, we assume that in local coordinates $(x,y)\in [0,1)\times \partial\mathcal{B}$, $\omega$ depends only on $x$. Denote by $C_{c}^{\infty}$ the space of smooth compactly supported functions. 
\begin{definition}
For any $\gamma\in\mathbb{R}$ consider the map 
$$
M_{\gamma}: C_{c}^{\infty}(\mathbb{R}_{+}\times\mathbb{R}^{n})\rightarrow C_{c}^{\infty}(\mathbb{R}^{n+1}) \quad \mbox{defined by} \quad u(x,y)\mapsto e^{(\gamma-\frac{n+1}{2})x}u(e^{-x},y). 
$$
Further, take a covering $\kappa_{i}:U_{i}\subseteq\partial\mathcal{B} \rightarrow\mathbb{R}^{n}$, $i\in\{1,...,N\}$, $N\in\mathbb{N}\backslash\{0\}$, of $\partial\mathcal{B}$ by coordinate charts and let $\{\phi_{i}\}_{i\in\{1,...,N\}}$ be a subordinated partition of unity. For any $s\in\mathbb{R}$ and $p\in(1,\infty)$ let $\mathcal{H}^{s,\gamma}_p(\mathbb{B})$ be the space of all distributions $u$ on $\mathbb{B}^{\circ}$ such that 
$$
\|u\|_{\mathcal{H}^{s,\gamma}_p(\mathbb{B})}=\sum_{i=1}^{N}\|M_{\gamma}(1\otimes \kappa_{i})_{\ast}(\omega\phi_{i} u)\|_{H^{s}_p(\mathbb{R}^{n+1})}+\|(1-\omega)u\|_{H^{s}_p(\mathbb{B})}
$$
is defined and finite, where $\ast$ refers to the push-forward of distributions. The space $\mathcal{H}^{s,\gamma}_p(\mathbb{B})$ is independent of the choice of the cut-off function $\omega$, the covering $\{\kappa_{i}\}_{i\in\{1,...,N\}}$ and the partition $\{\phi_{i}\}_{i\in\{1,...,N\}}$.

\end{definition}
Hence, a cone differential operator $A$ of order $\mu$ induces a bounded map
$$
A: \mathcal{H}^{s+\mu,\gamma+\mu}_p(\mathbb{B}) \rightarrow \mathcal{H}^{s,\gamma}_p(\mathbb{B}).
$$
Note that if $s\in \mathbb{N}$, then $\mathcal{H}^{s,\gamma}_p(\mathbb{B})$ is the space of all functions $u$ in $H^s_{p,loc}(\mathbb{B}^\circ)$ such that near the boundary
\begin{eqnarray*}\label{measure}
x^{\frac{n+1}2-\gamma}(x\partial_x)^{k}\partial_y^{\alpha}(\omega(x) u(x,y)) \in L^{p}\big([0,1)\times \partial \mathcal{B}, \sqrt{\mathrm{det}[h(x)]}\frac{dx}xdy\big),\quad k+|\alpha|\le s.
\end{eqnarray*}
 
We recall next some basic facts concerning the domain of a $\mathbb{B}$-elliptic cone differential operator $A$. Further details can be found in \cite{BS}, \cite{GKM}, \cite{Le}, \cite{SS}, \cite{Sh} or \cite{Schu}. We regard $A$ as an unbounded operator in $\mathcal{H}^{s,\gamma}_p(\mathbb{B})$, $s,\gamma\in\mathbb{R}$, $p\in(1,\infty)$, with domain $C_{c}^{\infty}(\mathbb{B}^{\circ})$. The domain of the minimal extension (i.e. the closure) $\underline{A}_{\min}$ of $A$ is given by 
$$
\mathcal{D}(\underline{A}_{\min})=\bigg\{u\in \bigcap_{\varepsilon>0}\mathcal{H}^{s+\mu,\gamma+\mu-\varepsilon}_p(\mathbb{B}) \, |\, x^{-\mu}\sum_{k=0}^{\mu}a_{k}(0)(-x\partial_{x})^{k}(\omega u)\in \mathcal{H}^{s,\gamma}_p(\mathbb{B})\bigg\}.
$$ 
If in addition the conormal symbol of $A$ is invertible on the line $\{\lambda\in\mathbb{C}\,|\, \mathrm{Re}(\lambda)= \frac{n+1}{2}-\gamma-\mu\}$, then we have that
$$
\mathcal{D}(\underline{A}_{\min})=\mathcal{H}^{s+\mu,\gamma+\mu}_p(\mathbb{B}).
$$ 

Concerning the domain of the maximal extension $\underline{A}_{\max}$ of $A$, which as usual is defined by
\begin{gather*}
\mathcal{D}(\underline{A}_{\max})=\bigg\{u\in\mathcal{H}^{s,\gamma}_p(\mathbb{B}) \, |\, Au\in \mathcal{H}^{s,\gamma}_p(\mathbb{B})\bigg\},
\end{gather*}
we have that 
\begin{gather}\label{dmax1}
\mathcal{D}(\underline{A}_{\max})=\mathcal{D}(\underline{A}_{\min})\oplus\mathcal{E}_{A,\gamma}.
\end{gather}
Here $\mathcal{E}_{A,\gamma}$ is a finite-dimensional space independent of $s$, that is called {\em asymptotics space}, which consists of linear combinations of $C^{\infty}(\mathbb{B}^\circ)$ functions that vanish on $\mathcal{B}\backslash([0,1)\times\partial\mathcal{B})$ and in local coordinates $(x,y)$ on the collar part $(0,1)\times\partial\mathcal{B}$ they are of the form $\omega(x)c(y)x^{-\rho}\log^{m}(x)$ where $c\in C^{\infty}(\partial\mathbb{B})$, $\rho\in \mathbb{C}$ and $m\in\mathbb{N}$. 

For the $x^{-1}$ powers $\rho$ describing $\mathcal{E}_{A,\gamma}$ we have that $\rho\in Q_{A,\gamma}$, where $Q_{A,\gamma}$ is a finite set of points in the strip 
\begin{gather}\label{strip}
\bigg\{\lambda\in\mathbb{C}\, |\, \mathrm{Re}(\lambda)\in [\frac{n+1}{2}-\gamma-\mu,\frac{n+1}{2}-\gamma)\bigg\}
\end{gather}
that is determined explicitly by the poles of the recursively defined family of symbols
\begin{gather*}\label{symb1}
g_{0}=f_{0}^{-1}, \quad g_{k}=-(T^{-k}f_{0}^{-1})\sum_{i=0}^{k-1}(T^{-i}f_{k-i})g_{i}, \quad k\in\{1,...,\mu-1\},
\end{gather*}
where 
$$
f_{\nu}(\lambda)=\frac{1}{\nu!}\sum_{i=0}^{\mu}(\partial_{x}^{\nu}a_{i})(0)\lambda^{i}, \quad \nu\in\{0,...,\mu-1\}, \quad \lambda\in\mathbb{C},
$$
and by $T^{\sigma}$, $\sigma\in\mathbb{R}$, we denote the action $(T^{\sigma}f)(\lambda)=f(\lambda+\sigma)$ (see e.g. \cite[(2.7)-(2.8)]{Sh}). The logarithmic powers $m$ are related to the orders of the above poles.

If $A$ has $x$-independent coefficients, then $Q_{A,\gamma}$ coincides with the set of poles of $(\sigma_{M}^{\mu}(A)(\cdot))^{-1}$ in the strip \eqref{strip} and
\begin{gather}\label{dmax}
\mathcal{E}_{A,\gamma}=\bigoplus_{\rho\in Q_{A,\gamma}}\mathcal{E}_{\rho},
\end{gather}
where $\mathcal{E}_{\rho}$, $\rho\in Q_{A,\gamma}$, is a finite-dimensional space independent of $s$ consisting of $C^{\infty}(\mathbb{B}^\circ)$ functions that vanish on $\mathcal{B}\backslash([0,1)\times\partial\mathcal{B})$ and in local coordinates $(x,y)$ on the collar part $(0,1)\times\partial\mathcal{B}$ they are of the form 
\begin{gather}\label{ero}
\omega(x)x^{-\rho}\sum_{i=0}^{m_{\rho}}c_{i}(y)\log^{i}(x)
\end{gather}
with $c_{i}\in C^{\infty}(\partial\mathbb{B})$ and certain $m_{\rho}\in\mathbb{N}$ depending on the order of $\rho$.

Under \eqref{dmax}, let the closed extension $\underline{A}$ of $A$ in $\mathcal{H}^{s,\gamma}_{p}(\mathbb{B})$ be given by
\begin{gather}\label{da}
\mathcal{D}(\underline{A})=\mathcal{D}(\underline{A}_{\min})\oplus\bigoplus_{\rho\in \underline{Q}_{A,\gamma}}\underline{\mathcal{E}}_{\rho},
\end{gather}
where $\underline{Q}_{A,\gamma}\subseteq Q_{A,\gamma}$ is a given subset and $\underline{\mathcal{E}}_{\rho}$ is a subspace of $\mathcal{E}_{\rho}$. The maximal domain structure together with standard properties of interpolation spaces, imply the following result concerning real interpolation between Mellin-Sobolev spaces and direct sums of Mellin-Sobolev spaces and asymptotics spaces, which is inspired by \cite[Lemma 5.2]{RS2}. 

\begin{theorem}\label{interembed}
Let $s,\gamma\in\mathbb{R}$, $\theta\in(0,1)$, $p,q\in(1,\infty)$, $A$ be $\mathbb{B}$-elliptic with $x$-independent coefficients and $\underline{A}$ be the closed extension \eqref{da}. Then, for any $\varepsilon>0$ the following embeddings hold
\begin{eqnarray*}
\lefteqn{\mathcal{H}^{s+\theta\mu+\varepsilon,\gamma+\theta\mu+\varepsilon}_{p}(\mathbb{B})+\bigoplus_{\rho\in \underline{Q}_{A,\gamma}}\underline{\mathcal{E}}_{\rho}}\\
&&\hookrightarrow(\mathcal{H}^{s,\gamma}_{p}(\mathbb{B}),\mathcal{H}^{s+\mu,\gamma+\mu}_{p}(\mathbb{B})\oplus \bigoplus_{\rho\in \underline{Q}_{A,\gamma}}\underline{\mathcal{E}}_{\rho})_{\theta,q}\\
&&\hookrightarrow \mathcal{H}^{s+\theta\mu-\varepsilon,\gamma+\theta\mu-\varepsilon}_{p}(\mathbb{B})+\bigoplus_{\rho\in \underline{Q}_{A,\gamma}}\mathcal{E}_{\rho}.
\end{eqnarray*}
\end{theorem}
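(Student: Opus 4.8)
The plan is to exploit the direct sum decomposition \eqref{da} to split the interpolation couple into a ``Mellin--Sobolev part'' and a finite-dimensional ``asymptotics part'', treat each piece with elementary interpolation theory, and then reassemble. Write $\mathcal{E}=\bigoplus_{\rho\in\underline{Q}_{A,\gamma},\,m\in\underline{I}_{A,\rho}}\underline{\mathcal{E}}_{\rho,m}$ and $\widehat{\mathcal{E}}=\bigoplus_{\rho\in\underline{Q}_{A,\gamma},\,l\le\max_{m}(m)}\mathcal{E}_{\rho,l}$. First I would record the standard interpolation identity for scales of Mellin--Sobolev spaces, $(\mathcal{H}^{s,\gamma}_p(\mathbb{B}),\mathcal{H}^{s+\mu,\gamma+\mu}_p(\mathbb{B}))_{\theta,q}$, which by the usual transference to $H^t_p(\mathbb{R}^{n+1})$-scales via the maps $M_\gamma$ and $(1\otimes\kappa_i)_\ast$ lies between $\mathcal{H}^{s+\theta\mu+\varepsilon,\gamma+\theta\mu+\varepsilon}_p(\mathbb{B})$ and $\mathcal{H}^{s+\theta\mu-\varepsilon,\gamma+\theta\mu-\varepsilon}_p(\mathbb{B})$ for every $\varepsilon>0$ (the $\varepsilon$-loss coming from the fact that real interpolation of Bessel-potential scales yields Besov rather than Bessel-potential spaces, together with the shift in the weight exponent $\gamma$, which interpolates linearly). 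This is exactly the kind of estimate behind \cite[Lemma 5.2]{RS2}, and I would cite/adapt it.

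Next I would handle the finite-dimensional summand. Since $\mathcal{E}$ is finite-dimensional, the couple $(\{0\},\mathcal{E})$ interpolates to give, up to equivalent norms, $(\{0\},\mathcal{E})_{\theta,q}=\mathcal{E}$; more precisely, for a finite-dimensional space all reasonable norms are equivalent, so interpolation produces $\mathcal{E}$ itself (with a constant depending on $\theta,q$ and the dimension). The only subtlety is that in the upper couple $\mathcal{E}$ sits inside $\mathcal{H}^{s+\mu,\gamma+\mu}_p(\mathbb{B})\oplus\mathcal{E}$ \emph{as a direct summand}, not as a subspace of a single Mellin--Sobolev space; but because \eqref{da} is a topological direct sum, the projection onto the $\mathcal{E}$-component is bounded, and real interpolation commutes with finite direct sums: $(\mathcal{H}^{s,\gamma}_p,\mathcal{H}^{s+\mu,\gamma+\mu}_p\oplus\mathcal{E})_{\theta,q}=(\mathcal{H}^{s,\gamma}_p,\mathcal{H}^{s+\mu,\gamma+\mu}_p)_{\theta,q}+\mathcal{E}$ after identifying $\mathcal{E}$ with its image under the inclusion into $\mathcal{H}^{s,\gamma}_p$. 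Here is where one must be careful about the left endpoint: $\mathcal{E}\subset\mathcal{H}^{s,\gamma}_p(\mathbb{B})$ holds precisely because $\rho\in Q_{A,\gamma}$ forces $\mathrm{Re}(\rho)<\frac{n+1}{2}-\gamma$, so the functions $c(y)x^{-\rho}\log^m x$ lie in $\mathcal{H}^{s,\gamma}_p$ near the tip; this is what makes $(\{0\},\mathcal{E})$ rather than a genuinely non-trivial couple the relevant object and explains why the asymptotics component survives interpolation unchanged (no $\varepsilon$-loss on that part). For the lower embedding one then only needs $\mathcal{E}\hookrightarrow\widehat{\mathcal{E}}$, which is immediate since $\underline{\mathcal{E}}_{\rho,m}\subseteq\mathcal{E}_{\rho,m}\subseteq\mathcal{E}_{\rho,l}$-summand for $l\le m$; the enlargement to all $l\le\max_m(m)$ is simply a convenient (non-sharp) containment absorbing the fact that $\widehat{\mathcal{E}}$ on the right is stated with the full $\mathcal{E}_{\rho,l}$ rather than $\underline{\mathcal{E}}_{\rho,l}$.

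Combining: the left embedding follows by adding the Mellin--Sobolev estimate (upper inclusion of the first step) to the identity $\mathcal{E}=(\{0\},\mathcal{E})_{\theta,q}$ and using that sums of couples interpolate to sums; the right embedding follows by adding the lower Mellin--Sobolev inclusion to $\mathcal{E}\hookrightarrow\widehat{\mathcal{E}}$. I expect the main obstacle to be the bookkeeping around the direct sum \eqref{da}: one must justify that interpolation genuinely respects the decomposition, i.e.\ that the bounded projections associated with \eqref{da} (which exist because the sum is topological and the asymptotics spaces are finite-dimensional) are compatible with the interpolation functor, and that the ambient space in which the sum $\mathcal{H}^{s+\theta\mu\pm\varepsilon,\gamma+\theta\mu\pm\varepsilon}_p+\mathcal{E}$ is formed is consistent on both sides (all spaces embed into $\mathcal{H}^{s',\gamma'}_p(\mathbb{B})$ for suitable $s',\gamma'$, so the sums make sense). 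Once that is set up, everything reduces to the scalar Mellin--Sobolev interpolation estimate plus triviality of interpolation on finite-dimensional spaces.
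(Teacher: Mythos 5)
Your argument for the first (left) embedding is fine and is essentially the paper's: $\mathcal{H}^{s+\theta\mu+\varepsilon,\gamma+\theta\mu+\varepsilon}_{p}(\mathbb{B})$ embeds into $(\mathcal{H}^{s,\gamma}_{p},\mathcal{H}^{s+\mu,\gamma+\mu}_{p})_{\theta,q}$, this embeds into the interpolation space of the enlarged couple by monotonicity, and the asymptotics sum sits in the intersection of the couple (it lies in $\mathcal{H}^{s,\gamma}_{p}(\mathbb{B})$ because $\mathrm{Re}(\rho)<\frac{n+1}{2}-\gamma$), hence in any real interpolation space. The problem is the second (right) embedding, where your proof has a genuine gap: you claim that real interpolation commutes with the decomposition \eqref{da}, i.e. that $(\mathcal{H}^{s,\gamma}_{p},\mathcal{H}^{s+\mu,\gamma+\mu}_{p}\oplus\mathcal{E})_{\theta,q}=(\mathcal{H}^{s,\gamma}_{p},\mathcal{H}^{s+\mu,\gamma+\mu}_{p})_{\theta,q}+\mathcal{E}$, justified by the boundedness of the projection onto $\mathcal{E}$. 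That projection is bounded only with respect to the topology of $\mathcal{D}(\underline{A})=\mathcal{H}^{s+\mu,\gamma+\mu}_{p}\oplus\mathcal{E}$; to split the interpolation functor you would need a projection acting boundedly on \emph{both} endpoint spaces of the couple, in particular a bounded operator on $\mathcal{H}^{s,\gamma}_{p}(\mathbb{B})$ that annihilates $\mathcal{H}^{s+\mu,\gamma+\mu}_{p}(\mathbb{B})$ and fixes $\mathcal{E}$. No such operator exists: $C_{0}^{\infty}(\mathbb{B}^{\circ})\subset\mathcal{H}^{s+\mu,\gamma+\mu}_{p}(\mathbb{B})$ is dense in $\mathcal{H}^{s,\gamma}_{p}(\mathbb{B})$, so any bounded operator vanishing on it is zero. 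Note also that your claimed identity would yield a sharper conclusion than the theorem itself (only the chosen subspaces $\underline{\mathcal{E}}_{\rho,m}$ on the right, no enlargement of the log powers), which is exactly the point where the statement is deliberately weakened; this is a strong hint that the splitting cannot be had so cheaply.

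The paper's proof of the right embedding is quite different and supplies the two steps your proposal is missing. First, since $A$ maps both $\mathcal{H}^{s+\mu,\gamma+\mu}_{p}\oplus\mathcal{E}$ into $\mathcal{H}^{s,\gamma}_{p}$ and $\mathcal{H}^{s,\gamma}_{p}$ into $\mathcal{H}^{s-\mu,\gamma-\mu}_{p}$, interpolation of this couple map together with \cite[Lemma 3.5]{RS2} shows that the interpolation space lies in the maximal domain of $A$ in $\mathcal{H}^{s+(\theta-1)\mu-\varepsilon,\gamma+(\theta-1)\mu-\varepsilon}_{p}(\mathbb{B})$; the description \eqref{dmax} of that maximal domain (at the lower weight $\nu=\gamma+(\theta-1)\mu-\varepsilon$) gives a preliminary inclusion into a Mellin--Sobolev space plus a possibly \emph{larger} family of asymptotics spaces $\mathcal{E}_{\sigma,l}$, $\sigma\in Q_{A,\nu}$. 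Second, to discard the spurious terms one applies the Fuchs-type operator $\omega(x)\prod_{\rho,m}(x\partial_{x}+\rho)^{m+1}$, which annihilates exactly the prescribed asymptotics $c(y)x^{-\rho}\log^{m}x$ and therefore maps the couple into a pure Mellin--Sobolev couple; interpolating this map and comparing weights forces any surviving asymptotics term to have $\sigma\in\underline{Q}_{A,\gamma}$ and $l\le\max_{m\in\underline{I}_{A,\sigma}}(m)$. Without an argument of this type (or some other substitute controlling which asymptotics can appear in the interpolation space), your proof of the second embedding does not go through.
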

\begin{proof}
By \cite[Lemma 3.6]{RS2}, we have that
\begin{gather}\label{poo}
\mathcal{H}^{s+\theta\mu+\varepsilon,\gamma+\theta\mu+\varepsilon}_{p}(\mathbb{B})\hookrightarrow(\mathcal{H}^{s,\gamma}_{p}(\mathbb{B}),\mathcal{H}^{s+\mu,\gamma+\mu}_{p}(\mathbb{B}))_{\theta,q}.
\end{gather}
Standard properties of interpolation spaces (see e.g. \cite[Theorem B.2.3]{Ha}) imply that 
\begin{eqnarray}\nonumber
\lefteqn{(\mathcal{H}^{s,\gamma}_{p}(\mathbb{B}),\mathcal{H}^{s+\mu,\gamma+\mu}_{p}(\mathbb{B}))_{\theta,q}}\\\label{emb1}
&&\hookrightarrow (\mathcal{H}^{s,\gamma}_{p}(\mathbb{B}),\mathcal{H}^{s+\mu,\gamma+\mu}_{p}(\mathbb{B})\oplus\bigoplus_{\rho\in \underline{Q}_{A,\gamma}}\underline{\mathcal{E}}_{\rho})_{\theta,q}.
\end{eqnarray}
Furthermore, 
\begin{gather}\label{emb26}
\bigoplus_{\rho\in \underline{Q}_{A,\gamma}}\underline{\mathcal{E}}_{\rho}\hookrightarrow(\mathcal{H}^{s,\gamma}_{p}(\mathbb{B}),\mathcal{H}^{s+\mu,\gamma+\mu}_{p}(\mathbb{B})\oplus\bigoplus_{\rho\in \underline{Q}_{A,\gamma}}\underline{\mathcal{E}}_{\rho})_{\theta,q}.
\end{gather}
Therefore, the first embedding follows by \eqref{poo}, \eqref{emb1} and \eqref{emb26}.

Concerning the second embedding, since $A$ maps the space
$\bigoplus_{\rho\in \underline{Q}_{A,\gamma}}\underline{\mathcal{E}}_{\rho}$
 into $\mathcal{H}^{s,\gamma}_{p}(\mathbb{B})$, we obtain that
\begin{eqnarray*}
\lefteqn{A: (\mathcal{H}^{s,\gamma}_{p}(\mathbb{B}),\mathcal{H}^{s+\mu,\gamma+\mu}_{p}(\mathbb{B})\oplus \bigoplus_{\rho\in \underline{Q}_{A,\gamma}}\underline{\mathcal{E}}_{\rho})_{\theta,q}}\\
&& \rightarrow (\mathcal{H}^{s-\mu,\gamma-\mu}_{p}(\mathbb{B}),\mathcal{H}^{s,\gamma}_{p}(\mathbb{B}))_{\theta,q}\hookrightarrow \mathcal{H}^{s+(\theta-1)\mu-\varepsilon,\gamma+(\theta-1)\mu-\varepsilon}_{p}(\mathbb{B}),
\end{eqnarray*}
where we have used \cite[Lemma 3.5]{RS2} for the last embedding. Hence, the interpolation space 
$$
(\mathcal{H}^{s,\gamma}_{p}(\mathbb{B}),\mathcal{H}^{s+\mu,\gamma+\mu}_{p}(\mathbb{B})\oplus \bigoplus_{\rho\in \underline{Q}_{A,\gamma}}\underline{\mathcal{E}}_{\rho})_{\theta,q}
$$ 
embeds to the maximal domain of $A$ in $\mathcal{H}^{s+(\theta-1)\mu-\varepsilon,\gamma+(\theta-1)\mu-\varepsilon}_{p}(\mathbb{B})$, i.e.
\begin{eqnarray}\nonumber
\lefteqn{(\mathcal{H}^{s,\gamma}_{p}(\mathbb{B}),\mathcal{H}^{s+\mu,\gamma+\mu}_{p}(\mathbb{B})\oplus \bigoplus_{\rho\in \underline{Q}_{A,\gamma}}\underline{\mathcal{E}}_{\rho})_{\theta,q}}\\\label{eref}
&&\hspace{30mm} \hookrightarrow \mathcal{H}^{s+\theta\mu-\varepsilon,\gamma+\theta\mu-2\varepsilon}_{p}(\mathbb{B})\oplus\bigoplus_{\sigma\in Q_{A,\nu}}\mathcal{E}_{\sigma},
\end{eqnarray}
where $\nu=\gamma+(\theta-1)\mu-\varepsilon$. 

By standard properties of interpolation spaces and according to \eqref{ero}, the operator 
\begin{gather}\label{op}
\omega(x)\bigg(\prod_{\rho\in \underline{Q}_{A,\gamma}}\big(x\partial_{x} + \rho\big)^{m_{\rho}+1}\bigg)
\end{gather}
maps the left hand side of \eqref{eref} to 
$$
(\mathcal{H}^{s-\eta,\gamma}_{p}(\mathbb{B}),\mathcal{H}^{s+\mu-\eta,\gamma+\mu}_{p}(\mathbb{B}))_{\theta,q}\hookrightarrow \mathcal{H}^{s+\theta\mu-\eta-\delta,\gamma+\theta\mu-\delta}_{p}(\mathbb{B}),
$$
for any $\delta>0$ and certain $\eta\in\mathbb{N}$. Therefore, by the construction of \eqref{op}, for the last sum in \eqref{eref} we deduce that $\sigma\in \underline{Q}_{A,\gamma}$.
\end{proof}

\begin{remark}\label{Rmrk}
Theorem \ref{interembed} still holds if we replace $\mathcal{H}^{s+\mu,\gamma+\mu}_{p}(\mathbb{B})$ with $\mathcal{D}(\underline{A}_{\min})$.
\end{remark}

Assume that $A$ has $x$-independent coefficients and let $\underline{A}$ be the closed extension \eqref{da}. Take $k\in\mathbb{N}$, $k\geq1$, and consider the integer powers $\underline{A}^{k}$ of $\underline{A}$ defined as usual by 
$$
\mathcal{D}(\underline{A}^{k})=\big\{u\in \mathcal{D}(\underline{A}^{k-1})\, |\, \underline{A}u\in \mathcal{D}(\underline{A}^{k-1})\big\}.
$$
Since $A^{k}$ is also $\mathbb{B}$-elliptic, by regarding $\underline{A}^{k}$ as a closed extension of $A^{k}$ in $ \mathcal{H}^{s,\gamma}_{p}(\mathbb{B})$ we have that
\begin{gather}\label{dak}
\mathcal{D}(\underline{A}^{k})=\mathcal{D}(\underline{A^{k}}_{\min})\oplus\bigoplus_{\rho\in \underline{Q}_{A^{k},\gamma}}\underline{\mathcal{F}}_{\rho},
\end{gather}
where $\underline{Q}_{A^{k},\gamma}\subseteq Q_{A^{k},\gamma}$ and $\underline{\mathcal{F}}_{\rho}\subseteq\mathcal{F}_{\rho}$ denotes the usual asymptotics space corresponding to the pole $\rho$. Recall that for the minimal domain in general we have that 
\begin{gather*}\label{akmin}
\mathcal{H}^{s+k\mu,\gamma+k\mu}_p(\mathbb{B})\hookrightarrow\mathcal{D}(\underline{A^{k}}_{\min})\hookrightarrow \mathcal{H}^{s+k\mu,\gamma+k\mu-\varepsilon}_p(\mathbb{B})
\end{gather*}
for all $\varepsilon>0$. Then, \cite[(I.2.5.2)]{Am}, \cite[(I.2.9.6)]{Am} and Remark \ref{Rmrk} imply the following.

\begin{corollary}{\rm (Complex powers)}\label{cp}
Let $s,\gamma\in\mathbb{R}$, $p\in(1,\infty)$, $c\geq0$, $k\in\mathbb{N}$, $k\geq1$ and $z\in\mathbb{C}$ with $0<\Re(z)<k$. Assume that $A$ is $\mathbb{B}$-elliptic with $x$-independent coefficients and that for the closed extension $\underline{A}$ given by \eqref{da}, $\underline{A}+c$ is sectorial, i.e. it belongs to the class $\mathcal{P}(0)$. Then, according to \eqref{dak}, for all $\varepsilon>0$ we have that 
\begin{eqnarray*}
\lefteqn{\mathcal{H}^{s+\mu\Re(z)+\varepsilon,\gamma+\mu\Re(z)+\varepsilon}_{p}(\mathbb{B})+\bigoplus_{\rho\in \underline{Q}_{A^{k},\gamma}}\underline{\mathcal{F}}_{\rho}}\\
&&\hookrightarrow\mathcal{D}((\underline{A}+c)^{z})\hookrightarrow
 \mathcal{H}^{s+\mu\Re(z)-\varepsilon,\gamma+\mu\Re(z)-\varepsilon}_{p}(\mathbb{B})+\bigoplus_{\rho\in \underline{Q}_{A^{k},\gamma}}\mathcal{F}_{\rho}.
\end{eqnarray*}
\end{corollary}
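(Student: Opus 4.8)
The plan is to reduce Corollary \ref{cp} to Theorem \ref{interembed} (in its Remark \ref{Rmrk} form) applied to the operator $A^{k}$, using the standard interpolation-theoretic sandwich for the domains of complex powers of a sectorial operator. First I would fix $k\geq1$ with $k>\Re(z)$ and observe that since $\underline{A}\in\mathcal{P}(0)$, by the remarks following the definition of sectorial operator we have $\underline{A}\in\mathcal{P}(\psi)$ for some $\psi\in(0,\pi)$, so the complex powers $\underline{A}^{z}$ are well defined and, moreover, $\underline{A}^{k}$ is the $k$-th integer power with domain $\mathcal{D}(\underline{A}^{k})$ as displayed in \eqref{dak}. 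The key soft input is the pair of embeddings
\begin{gather*}
(X_{0},\mathcal{D}(\underline{A}^{k}))_{\theta,q}\hookrightarrow\mathcal{D}(\underline{A}^{z})\hookrightarrow (X_{0},\mathcal{D}(\underline{A}^{k}))_{\theta',q},
\end{gather*}
valid for any $q\in(1,\infty)$ whenever $0<\theta<\frac{\Re(z)}{k}<\theta'<1$, where $X_{0}=\mathcal{H}^{s,\gamma}_{p}(\mathbb{B})$; this is exactly \cite[(I.2.5.2)]{Am} and \cite[(I.2.9.6)]{Am}, which relate fractional domains of a sectorial operator to real interpolation spaces of $X_{0}$ and $\mathcal{D}(\underline{A}^{k})$.

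Next I would apply Theorem \ref{interembed}, with $A$ there replaced by the $\mathbb{B}$-elliptic cone differential operator $A^{k}$ of order $k\mu$, the exponent $\theta$ there replaced by our interpolation parameter, and using Remark \ref{Rmrk} to write $\mathcal{D}(\underline{A^{k}}_{\min})$ in place of $\mathcal{H}^{s+k\mu,\gamma+k\mu}_{p}(\mathbb{B})$. Since $\theta (k\mu)$ can be made as close to $\mu\Re(z)$ as we like by choosing $\theta$ close to $\Re(z)/k$ from below (for the lower embedding) or $\theta'$ close to $\Re(z)/k$ from above (for the upper embedding), and since the $\varepsilon$ in Theorem \ref{interembed} is arbitrary, the Mellin–Sobolev indices $s+\theta k\mu\pm\varepsilon$, $\gamma+\theta k\mu\pm\varepsilon$ can be absorbed into $s+\mu\Re(z)\pm\varepsilon$, $\gamma+\mu\Re(z)\pm\varepsilon$ after relabelling $\varepsilon$. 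The asymptotics contributions on both sides are precisely the spaces $\underline{\mathcal{F}}_{\rho,m}$ and $\mathcal{F}_{\rho,l}$ attached to $A^{k}$ via \eqref{dak}, with the index set on the upper side being $l\in\{0,\dots,\max_{m\in\underline{I}_{A^{k},\rho}}(m)\}$, exactly as in the second embedding of Theorem \ref{interembed}. Chaining these inclusions with the interpolation sandwich above yields the claimed two-sided embedding for $\mathcal{D}(\underline{A}^{z})$.

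The one point that needs care, and which I expect to be the main (if modest) obstacle, is bookkeeping the two independent small parameters: the gap between $\theta$ (resp. $\theta'$) and $\Re(z)/k$ on one hand, and the $\varepsilon$ internal to Theorem \ref{interembed} on the other. One must check that for every target $\varepsilon>0$ in the statement of Corollary \ref{cp} there is a choice of $\theta<\Re(z)/k$ and an application of Theorem \ref{interembed} with a smaller internal $\varepsilon'$ so that $s+\theta k\mu+\varepsilon'\geq s+\mu\Re(z)-\varepsilon$ (so that the coarser space on the left still embeds into the interpolation space, using monotonicity of the Mellin–Sobolev scale in both indices), and symmetrically for the upper embedding; this is routine since all spaces involved are monotone in $(s,\gamma)$. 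A secondary point is to confirm that the hypotheses of Theorem \ref{interembed} genuinely apply to $A^{k}$: it is a cone differential operator of order $k\mu$, it is $\mathbb{B}$-elliptic because $A$ is (its principal and rescaled symbols are the $k$-th powers of those of $A$), and $\underline{A}^{k}$ is one of the closed extensions of the form \eqref{da} for $A^{k}$ — all of which is already noted in the text preceding the corollary. Once these are in place the proof is a direct concatenation of Theorem \ref{interembed}, Remark \ref{Rmrk}, and the cited interpolation identities from \cite{Am}.
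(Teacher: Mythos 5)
Your overall route is exactly the one the paper intends: Corollary \ref{cp} is meant to follow by combining the interpolation characterization of fractional domains from \cite[(I.2.5.2)]{Am}, \cite[(I.2.9.6)]{Am} with Theorem \ref{interembed} applied to the $\mathbb{B}$-elliptic operator $A^{k}$ of order $k\mu$ (in the Remark \ref{Rmrk} form, with $\mathcal{D}(\underline{A^{k}}_{\min})$ in place of $\mathcal{H}^{s+k\mu,\gamma+k\mu}_{p}(\mathbb{B})$, using the two-sided Mellin--Sobolev estimate for the minimal domain stated just before the corollary). However, your key sandwich is stated with the inequalities reversed, and as written it is false. With the couple ordered as $(X_{0},\mathcal{D}(\underline{A}^{k}))_{\theta,q}$, larger $\theta$ means a smaller space, and the correct statement (which is what the cited results of Amann give, since $(X_{0},X_{1})_{\alpha,1}\hookrightarrow\mathcal{D}(A^{\alpha})\hookrightarrow(X_{0},X_{1})_{\alpha,\infty}$ together with the nesting of the real interpolation scale) is
\begin{gather*}
(X_{0},\mathcal{D}(\underline{A}^{k}))_{\theta,q}\hookrightarrow\mathcal{D}(\underline{A}^{z})\hookrightarrow (X_{0},\mathcal{D}(\underline{A}^{k}))_{\theta',q}\quad\text{for}\quad \theta'<\tfrac{\Re(z)}{k}<\theta,
\end{gather*}
i.e.\ the opposite of your condition $0<\theta<\Re(z)/k<\theta'<1$. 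Already for $k=1$ and $z=\alpha\in(0,1)$ real, $(X_{0},\mathcal{D}(A))_{\theta,q}$ with $\theta<\alpha$ does not embed into $\mathcal{D}(A^{\alpha})$ in general, so the left half of your sandwich cannot be used as stated. Your bookkeeping paragraph inherits the same reversal: you propose to take $\theta$ close to $\Re(z)/k$ \emph{from below} for the embedding into $\mathcal{D}(\underline{A}^{z})$, which is precisely the case in which that embedding fails.

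The error is repairable and, once repaired, the argument is the paper's. For the first embedding of the corollary choose $\theta$ slightly \emph{above} $\Re(z)/k$ and an internal $\varepsilon'<\varepsilon$ with $\theta k\mu+\varepsilon'\leq\mu\Re(z)+\varepsilon$ (possible since $\Re(z)<k$), so that $\mathcal{H}^{s+\mu\Re(z)+\varepsilon,\gamma+\mu\Re(z)+\varepsilon}_{p}(\mathbb{B})\hookrightarrow\mathcal{H}^{s+\theta k\mu+\varepsilon',\gamma+\theta k\mu+\varepsilon'}_{p}(\mathbb{B})$ and Theorem \ref{interembed} followed by the corrected left half of the sandwich gives the claim; for the second embedding choose $\theta'$ slightly \emph{below} $\Re(z)/k$ with $\theta' k\mu-\varepsilon'\geq\mu\Re(z)-\varepsilon$ and argue symmetrically. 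This sign of the approach to $\Re(z)/k$ is exactly the reason the corollary carries $+\varepsilon$ on the left-hand scale and $-\varepsilon$ on the right-hand scale, so getting it the right way around is not a cosmetic point; the remaining items you list (B-ellipticity of $A^{k}$, $\underline{A}^{k}$ being a closed extension of the form \eqref{da}, the asymptotics index sets) are handled as in the text preceding the corollary and are fine.
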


As examples of sectorial closed extensions of $\mathbb{B}$-elliptic cone differential operators we refer to \cite[Proposition 1]{CSS2} and \cite[Theorem 4.3]{Sh}. A typical one is obtained by the Laplacian $\Delta$ on $\mathcal{B}$ induced by the metric $g$. This operator in the collar neighborhood $(0,1)\times\partial\mathcal{B}$ near the boundary is of the form 
\begin{eqnarray}\label{Delta}
\Delta = \frac{1}{x^{2}}\big((x\partial_{x})^2 + (n-1+ \frac{x\partial_{x} (\det[h(x)])}{2\det[h(x)]})(x\partial_x )+\Delta_{h(x)}
 \big),
\end{eqnarray}
where $\Delta_{h(x)}$ is the Laplacian on the cross section $\partial\mathcal{B}$ induced by the metric $h(x)$. The conormal symbol of $\Delta$ is given by 
$$
\sigma_{M}(\Delta)(\lambda)=\lambda^{2}-(n-1)\lambda+\Delta_{h(0)}.
$$

Clearly, $(\sigma_{M}(\Delta)(\lambda))^{-1}$ is defined as a meromorphic in $\lambda\in\mathbb{C}$ family of pseudodifferential operators with values in $\mathcal{L}(H_{p}^{s}(\partial\mathbb{B}),H_{p}^{s+2}(\partial\mathbb{B}))$, $s\in\mathbb{R}$, $p\in(1,\infty)$. More precisely, if $\sigma(\Delta_{h(0)})=\{\lambda_{i}\}_{i\in\mathbb{N}}$ is the spectrum of $\Delta_{h(0)}$, then the poles of $(\sigma_{M}(\Delta)(\cdot))^{-1}$ coincide with the set
$$
\bigg\{\frac{n-1}{2}\pm \sqrt{\bigg(\frac{n-1}{2}\bigg)^{2}-\lambda_{i}}\bigg\}_{i\in\mathbb{N}}.
$$
Therefore, the pole zero of $(\sigma_{M}(\Delta)(\cdot))^{-1}$ is always contained in the strip \eqref{strip} provided that $\gamma\in(\frac{n-3}{2},\frac{n+1}{2})$. In this case, denote again by $\mathbb{C}$ the subspace of $\mathcal{E}_{\Delta,\gamma}$ in \eqref{dmax1} under the choice $\rho=m=0$ and $c|_{\partial\mathcal{B}_{i}}=c_{i}$, $c_{i}\in \mathbb{C}$, $i\in\{1,...,k_{\mathcal{B}}\}$, i.e. $\mathbb{C}$ consists of smooth functions that are locally constant close to the boundary. Such a realization can satisfy the property of maximal $L^q$-regularity, as we can see from the following result.

\begin{theorem}\label{RsecD}
Let $s\geq0$, $p\in(1,\infty)$ and the weight $\gamma$ be chosen as 
\begin{eqnarray}\label{choicegg}
\frac{n-3}2<\gamma<\min\bigg\{-1+\sqrt{\bigg(\frac{n-1}{2}\bigg)^{2}-\lambda_{1}} ,\frac{n+1}{2}\bigg\},
\end{eqnarray}
where $\lambda_{1}$ is the greatest non-zero eigenvalue of the boundary Laplacian $\Delta_{h(0)}$. Consider the closed extension $\underline{\Delta}$ of $\Delta$ in $\mathcal{H}_{p}^{s,\gamma}(\mathbb{B})$ with domain 
\begin{gather}\label{DD}
\mathcal{D}(\underline{\Delta})=\mathcal{H}_{p}^{s+2,\gamma+2}(\mathbb{B})\oplus\mathbb{C}.
\end{gather}
Then, for any $\theta\in[0,\pi)$ there exists some $c>0$ such that $c-\underline{\Delta}$ is $R$-sectorial of angle $\theta$.
\end{theorem}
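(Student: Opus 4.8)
The plan is to verify that the shifted operator $c-\Delta$, taken with the realization \eqref{DD}, satisfies the parameter-ellipticity conditions in the parameter-dependent cone pseudodifferential calculus that are known to imply $R$-sectoriality of a closed extension of a $\mathbb{B}$-elliptic cone differential operator (see \cite[Proposition 1]{CSS2} and \cite[Theorem 4.3]{Sh}), and then to choose the shift $c$ large enough to accommodate an arbitrary angle $\theta<\pi$. As a preliminary I would record the relevant symbols. From \eqref{Delta} the Mellin coefficients of $\Delta$ are $a_{2}(x)\equiv 1$, $a_{1}(x)=n-1+\frac{x\partial_{x}(\det h(x))}{2\det h(x)}$ and $a_{0}(x)=\Delta_{h(x)}$, so the rescaled symbol equals $\tilde{\sigma}_{\psi}^{2}(\Delta)(y,\eta,\xi)=-\xi^{2}-|\eta|_{h(0)}^{2}$, which is nonzero off the zero section, while $\sigma_{\psi}^{2}(\Delta)=-|\cdot|_{g}^{2}$ is invertible on $T^{\ast}\mathcal{B}^{\circ}\setminus\{0\}$; thus $\Delta$ is $\mathbb{B}$-elliptic. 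Adding the constant $c$ changes neither symbol, and since $c=x^{-2}(x^{2}c)$ with $x^{2}c$ vanishing at $x=0$, it also leaves the conormal symbol $\sigma_{M}(c-\Delta)(\lambda)=-(\lambda^{2}-(n-1)\lambda+\Delta_{h(0)})$ unchanged; in particular the asymptotics structures of $\Delta$ and of $c-\Delta$ coincide.

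Next I would use the weight \eqref{choicegg} to see that \eqref{DD} is a well-defined closed extension of $c-\Delta$ in $\mathcal{H}_{p}^{s,\gamma}(\mathbb{B})$ and to identify its minimal domain. Since $\gamma\in\big(\frac{n-3}{2},\frac{n+1}{2}\big)$, the point $\lambda=0$, produced by the constant eigenfunction of $\Delta_{h(0)}$, is a simple pole of $(\sigma_{M}(\Delta)(\lambda))^{-1}$ lying in the strip $\{\mathrm{Re}(\lambda)\in[\frac{n-3}{2}-\gamma,\frac{n+1}{2}-\gamma)\}$, with associated asymptotics space $\mathcal{E}_{0,0}=\mathbb{C}$ and no logarithmic terms; hence adjoining $\mathbb{C}$ gives a closed extension via \eqref{da}. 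The inequality $\gamma<-1+\sqrt{(\frac{n-1}{2})^{2}-\lambda_{1}}$ forces every pole coming from a nonzero eigenvalue of $\Delta_{h(0)}$ to lie strictly below that strip, so $\sigma_{M}(\Delta)(\lambda)$ is invertible on the line $\{\mathrm{Re}(\lambda)=\frac{n-3}{2}-\gamma\}$ and $\mathcal{D}(\underline{\Delta}_{\min})=\mathcal{H}_{p}^{s+2,\gamma+2}(\mathbb{B})$.

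The main step is the resolvent estimate. Fix $\theta\in(0,\pi)$; the case $\theta=0$ then follows a fortiori. I would show that $c-\Delta-\mu$ is parameter-elliptic in the cone sense for $\mu\in S_{\theta}$: the interior and rescaled symbols become $-|\cdot|_{g}^{2}-\mu$ and $-\xi^{2}-|\eta|_{h(0)}^{2}-\mu$, which are invertible off the zero section because $S_{\theta}$ with $\theta<\pi$ avoids the open negative real half-line; the conormal symbol has no poles on the weight line by the choice of $\gamma$; and the model cone operator $\Delta_{\wedge}=x^{-2}\big((x\partial_{x})^{2}+(n-1)(x\partial_{x})+\Delta_{h(0)}\big)$ on $\mathbb{R}_{+}\times\partial\mathbb{B}$, taken with the domain induced by adjoining the constants, has $\mathrm{spec}(\underline{\Delta}_{\wedge})\subseteq(-\infty,0]$, so $\underline{\Delta}_{\wedge}-\mu$ is invertible for $\mu\in S_{\theta}\setminus\{0\}$ with $\kappa$-homogeneous, hence uniform, resolvent bounds. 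This last spectral statement is the main obstacle; I would obtain it by separating variables with respect to the spectral decomposition of $\Delta_{h(0)}$, which reduces $\Delta_{\wedge}$ to a family of Bessel-type ordinary differential operators on $\mathbb{R}_{+}$ with the weighted measure $x^{n}\,dx$; each of these is symmetric and non-positive, and on the eigenspace for the eigenvalue $0$ the adjunction of the constant function selects a non-positive self-adjoint realization. Inserting these symbol estimates into the parameter-dependent parametrix construction of \cite[Proposition 1]{CSS2} and \cite[Theorem 4.3]{Sh} gives $(1+|\mu|)\,\|(c-\underline{\Delta}+\mu)^{-1}\|_{\mathcal{L}(\mathcal{H}_{p}^{s,\gamma}(\mathbb{B}))}\leq K$ for $\mu\in S_{\theta}$ with $|\mu|$ large. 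Since $\mathcal{B}$ is compact, the embedding $\mathcal{D}(\underline{\Delta})\hookrightarrow\mathcal{H}_{p}^{s,\gamma}(\mathbb{B})$ is compact, so $\underline{\Delta}$ has discrete spectrum and only finitely many eigenvalues lie in $S_{\theta}$; choosing $c=c(\theta)>0$ large removes these as well, yielding $S_{\theta}\subset\rho(-(c-\underline{\Delta}))$ together with the full estimate, i.e. $c-\underline{\Delta}\in\mathcal{P}(\theta)$.

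It remains to pass from boundedness of the resolvent to $R$-boundedness. Here I would keep track of $R$-bounds throughout the parametrix construction exactly as in the cited references; alternatively, since $\mathcal{H}_{p}^{s,\gamma}(\mathbb{B})$ is a UMD space, one can first derive a bounded $H^{\infty}$-calculus (equivalently, bounded imaginary powers) for $c-\underline{\Delta}$ and then use that a bounded $H^{\infty}$-calculus of angle $\theta$ entails $R$-sectoriality of angle $\theta$. As the whole construction is uniform in the smoothness index, the conclusion holds for every $s\geq 0$ (indeed for every $s\in\mathbb{R}$) with one and the same $c$, which completes the argument.
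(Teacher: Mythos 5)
Note first that the paper itself does not prove Theorem \ref{RsecD} internally: its proof is a citation of \cite[Theorem 4.1]{R} and \cite[Theorem 5.6]{RS}, which assert exactly the $R$-sectoriality of the realization \eqref{DD} on (higher order) Mellin--Sobolev spaces. Your plan follows the same general route as those references: parameter-ellipticity in the cone calculus, invertibility of the conormal symbol on the relevant weight line under \eqref{choicegg}, spectral analysis of the model cone operator, then resolvent and $R$-bounds. The preliminary steps you carry out are essentially right: $\mathbb{B}$-ellipticity of $\Delta$, the identification $\mathcal{D}(\underline{\Delta}_{\min})=\mathcal{H}^{s+2,\gamma+2}_p(\mathbb{B})$, and the fact that \eqref{choicegg} leaves only the pole at $\lambda=0$ in the relevant strip, so that adjoining $\mathbb{C}$ gives an admissible closed extension (with the caveat that for $n=1$ the pole at $\lambda=0$ is double, so ``simple pole, no logarithmic terms'' is not accurate in general, although \eqref{DD} is still a legitimate choice of subspace).

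The genuine gap sits exactly where the theorem is hard. The results you lean on for the parametrix, \cite[Proposition 1]{CSS2} and \cite[Theorem 4.3]{Sh}, produce norm bounds, i.e. sectoriality, not $R$-sectoriality; ``keeping track of $R$-bounds throughout the parametrix construction'' is not a routine bookkeeping step, and the $R$-bounded (equivalently bounded $H^{\infty}$) version of this construction for the realization \eqref{DD}, in particular its extension from $s=0$ to the higher order spaces $s>0$, is precisely the content of \cite[Theorem 5.6]{RS} and \cite[Theorem 4.1]{R} --- so at the decisive point your sketch amounts to re-invoking the very result to be proved. Three further points would need repair to make the sketch self-contained: (i) your fallback ``a bounded $H^{\infty}$-calculus of angle $\theta$ entails $R$-sectoriality of angle $\theta$'' has the angle convention backwards for this paper's definition of $S_{\theta}$ --- to get $R$-bounds of $\lambda(c-\underline{\Delta}+\lambda)^{-1}$ on $S_{\theta}$ with $\theta$ close to $\pi$ one needs an $H^{\infty}$-angle smaller than $\pi-\theta$, which is itself a nontrivial assertion; (ii) the non-positivity of the model cone operator via symmetric Bessel-type reductions is an $L^{2}$ argument, while the required statement is absence of spectrum in the sector together with $\kappa$-uniform resolvent bounds on the $L^{p}$, higher order model cone spaces with the domain obtained by adjoining (cut-off) constants; (iii) the concluding claim that the construction works ``for every $s\in\mathbb{R}$ with one and the same $c$'' overreaches what the cited literature provides (the theorem is stated for $s\geq0$, and the passage to $s>0$ is a substantial part of \cite{RS}). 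As it stands, the proposal is a correct roadmap to the known proof, but not a proof.
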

\begin{proof}
This is \cite[Theorem 4.1]{R} or \cite[Theorem 5.6]{RS}.
\end{proof}

\section{The Heat Equation on Manifolds with Conical Singularities}

We consider the following well known linear parabolic equation 
\begin{eqnarray}\label{e1}
u'(t)-\Delta u(t)&=&f(t),\,\,\,\,\,\, t>0,\\\label{e2}
u(0)&=&u_{0},
\end{eqnarray}
for appropriate functions $f$ and $u_{0}$, which describes the heat distribution in a given domain. The above problem, called {\em heat equation}, was treated in \cite{CSS2}, \cite{RS2} and \cite{Sh} on manifolds with straight conical tips and it was shown existence, uniqueness and maximal $L^{q}$-regularity of the solution on Mellin-Sobolev spaces. More precisely, in \cite[Theorem 6]{CSS2} it is shown maximal $L^q$-regularity for \eqref{e1}-\eqref{e2} by employing the minimal extension of the Laplacian on a weighted $L^p$-space. Then, this result is extended to {\em dilation invariant} extensions of the Laplacian in \cite[Theorem 5.8]{Sh}. In \cite{RS2} a non-linear generalization, called {\em porous medium equation}, is considered and it is shown maximal $L^q$-regularity on arbitrary order Mellin-Sobolev spaces \cite[Theorem 4.2]{RS2} as well as on spaces with asymptotics in the sense of the domain of bi-Laplacian \cite[Proposition 7.5]{RS2}. The same problem is treated in \cite{JL} on surfaces with straight conical tips by using the Friedrichs extension of the Laplacian. Finally, we also refer to \cite{Be} for an alternative approach to the problem with similar results, as well as to \cite{Ve} for the properties of the bi-harmonic heat kernel on such spaces.

In order to study the evolution on asymptotics spaces, we consider here the same problem with the difference that the Laplacian is chosen on the power scale defined by the realization \eqref{DD}.

\begin{theorem}\label{t1}
Let $s\geq0$, $k\in\mathbb{N}$, $k\geq1$, $p,q\in(1,\infty)$, $\gamma$ be chosen as in \eqref{choicegg},
$$
f\in L^{q}(0,\infty;\mathcal{D}(\underline{\Delta}^{k-1})) \quad \text{and} \quad u_{0}\in (\mathcal{D}(\underline{\Delta}^{k}),\mathcal{D}(\underline{\Delta}^{k-1}))_{\frac{1}{q},q},
$$ 
where $\underline{\Delta}$ is the realization \eqref{DD}. Then, for each $T>0$ there exists a unique 
$$
u\in W^{1,q}(0,T;\mathcal{D}(\underline{\Delta}^{k-1}))\cap L^{q}(0,T;\mathcal{D}(\underline{\Delta}^{k}))
$$ 
solving the problem \eqref{e1}-\eqref{e2} on $[0,T)\times \mathbb{B}$. Moreover, $u$ depends continuously on $f$ and $u_{0}$.
\end{theorem}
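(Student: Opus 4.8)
The plan is to rewrite \eqref{e1}--\eqref{e2} as the abstract problem \eqref{AP}--\eqref{AP2} on the Banach couple formed by the domains of two consecutive powers of the realization \eqref{DD}, and then to run the maximal $L^{q}$-regularity scheme of Section~2. First I would fix the weight $\gamma$ as in \eqref{choicegg} and apply Theorem~\ref{RsecD} with an angle $\theta\in(\frac{\pi}{2},\pi)$, obtaining a constant $c>0$ for which $c-\underline{\Delta}$ is $R$-sectorial of angle $\theta$ on $\mathcal{H}^{s,\gamma}_{p}(\mathbb{B})$. In particular $c-\underline{\Delta}\in\mathcal{P}(\theta)$ is invertible, so for every $j\geq0$ the scalar shift leaves the iterated domains unchanged, $\mathcal{D}\big((c-\underline{\Delta})^{j}\big)=\mathcal{D}(\underline{\Delta}^{j})$, and $(c-\underline{\Delta})^{j}$ is an isomorphism of $\mathcal{D}(\underline{\Delta}^{j})$ onto $\mathcal{H}^{s,\gamma}_{p}(\mathbb{B})$ with inverse $(c-\underline{\Delta})^{-j}\in\mathcal{L}(\mathcal{H}^{s,\gamma}_{p}(\mathbb{B}))$. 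Setting $X_{0}=\mathcal{D}(\underline{\Delta}^{k-1})$ and $X_{1}=\mathcal{D}(\underline{\Delta}^{k})$, Lemma~\ref{l1}, applied to the $R$-sectorial operator $c-\underline{\Delta}$, shows that the operator $(c-\underline{\Delta})_{k}\colon X_{1}\to X_{0}$, $x\mapsto(c-\underline{\Delta})x$, is itself $R$-sectorial of angle $\theta$ on $X_{0}$.

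Next I would check that $X_{0}$ and $X_{1}$ are UMD spaces, so that Theorem~\ref{KW} applies. By the previous step each is isomorphic to $\mathcal{H}^{s,\gamma}_{p}(\mathbb{B})$, and $\mathcal{H}^{s,\gamma}_{p}(\mathbb{B})$ is UMD for $p\in(1,\infty)$: up to isomorphism it is a closed subspace of a finite direct sum of Bessel-potential spaces $H^{s}_{p}(\mathbb{R}^{n+1})$ and $H^{s}_{p}(\mathbb{B})$, each of which is isomorphic through a Bessel potential to an $L^{p}$-space and hence UMD, and the UMD property is inherited by finite direct sums and by closed subspaces. Since $\theta>\frac{\pi}{2}$, Theorem~\ref{KW} now yields that $(c-\underline{\Delta})_{k}$ has maximal $L^{q}$-regularity on the couple $(X_{1},X_{0})$.

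Finally I would transfer this back to the heat equation by undoing the exponential shift. The substitution $u(t)=e^{ct}v(t)$ turns $u'-\underline{\Delta}u=f$, $u(0)=u_{0}$, into $v'+(c-\underline{\Delta})_{k}v=e^{-ct}f$, $v(0)=u_{0}$. On the finite interval $(0,T)$ multiplication by $e^{\pm ct}$ is an isomorphism of the relevant $L^{q}$- and Sobolev spaces, so $e^{-ct}f\in L^{q}(0,T;X_{0})$ by the hypothesis $f\in L^{q}(0,\infty;\mathcal{D}(\underline{\Delta}^{k-1}))$, while $u_{0}\in(X_{1},X_{0})_{\frac1q,q}$ is exactly the stated assumption on the initial datum. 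Maximal $L^{q}$-regularity of $(c-\underline{\Delta})_{k}$ then produces a unique $v\in W^{1,q}(0,T;X_{0})\cap L^{q}(0,T;X_{1})$, depending continuously on the data, and $u=e^{ct}v$ lies in the same regularity class and solves the transformed problem; since $\mathcal{D}(\underline{\Delta}^{k})\subseteq\mathcal{D}(\underline{\Delta})$ and $\underline{\Delta}$ acts as the differential operator $\Delta$ on $\mathbb{B}^{\circ}$, this $u$ solves \eqref{e1}--\eqref{e2} on $[0,T)\times\mathbb{B}$, with uniqueness and continuous dependence inherited from the abstract statement. The step that I expect to take the most care is the identification and UMD property of the power-scale spaces $\mathcal{D}(\underline{\Delta}^{j})$ via the isomorphism induced by $(c-\underline{\Delta})^{j}$; once this bookkeeping is settled, the rest follows directly from the results assembled in Sections~2 and~3.
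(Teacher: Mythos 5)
Your proposal is correct and follows essentially the same route as the paper: shift by $c$ using Theorem \ref{RsecD}, lift to the power scale via Lemma \ref{l1}, apply the Kalton--Weis Theorem \ref{KW}, and undo the substitution $v=e^{-ct}u$. The extra details you supply (identification $\mathcal{D}((c-\underline{\Delta})^{j})=\mathcal{D}(\underline{\Delta}^{j})$ and the UMD property of $\mathcal{H}^{s,\gamma}_{p}(\mathbb{B})$ and hence of the power-scale spaces) are exactly the points the paper leaves implicit, and your treatment of them is sound.
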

\begin{proof}
Let $\theta\in(\frac{\pi}{2},\pi)$ and $c>0$ such that $c-\underline{\Delta}$ is $R$-sectorial of angle $\theta$ due to Theorem \ref{RsecD}. Consider the following linear degenerate parabolic problem
\begin{eqnarray*}\label{e33}
v'(t)+(c-\underline{\Delta})v(t)&=&e^{-ct}f(t),\,\,\,\,\,\, t\in(0,T),\\\label{e44}
v(0)&=&u_{0}.
\end{eqnarray*}
We regard $\underline{\Delta}$ as an operator from $\mathcal{D}(\underline{\Delta}^{k})$ to $\mathcal{D}(\underline{\Delta}^{k-1})$. Then, the result follows by applying Theorem \ref{KW} and Lemma \ref{l1} to the above problem and then setting $v=e^{-ct}u$. 
\end{proof}

The maximal $L^{q}$-regularity of the solution obtained in the above theorem together with the interpolation results of the previous section can show that the asymprotics space decomposition of the initial data $u_{0}$ in \eqref{e2} can be preserved under the evolution induced by \eqref{e1}. More precisely, by the embedding \eqref{embmaxreg}, the reiteration result \cite[Corollary 7.3]{Haase}, \cite[Lemma 5.2]{RS2}, Remark \ref{Rmrk} and Theorem \ref{t1} we obtain the following.

\begin{corollary}
Assume that the metric $h$ in \eqref{metre2} is independent of $x$. Let $s\geq0$, $k\in\mathbb{N}$, $k\geq1$, $p,q\in(1,\infty)$, $\gamma$ be chosen as in \eqref{choicegg}, $\varepsilon>0$, 
$$
f\in L^{q}(0,\infty;\mathcal{H}^{s+2(k-1),\gamma+2(k-1)}_p(\mathbb{B})) 
$$
and
$$
 \quad u_{0}\in \mathcal{H}^{s+2k-\frac{2}{q}+\varepsilon,\gamma+2k-\frac{2}{q}+\varepsilon}_{p}(\mathbb{B})+\bigoplus_{\rho\in \underline{Q}_{\Delta^{k},\gamma}}\underline{\mathcal{F}}_{\rho},
$$ 
where the asymptotics spaces involving the initial data determine the domain of the $k^{\mathrm{th}}$ power $\underline{\Delta}^{k}$ of the realization \eqref{DD}, i.e. we have that
$$
\mathcal{D}(\underline{\Delta}^{k})=\mathcal{D}(\underline{\Delta^{k}}_{\min})\oplus\bigoplus_{\rho\in \underline{Q}_{\Delta^{k},\gamma}}\underline{\mathcal{F}}_{\rho}
$$
with 
$$
\mathcal{H}^{s+2k,\gamma+2k}_p(\mathbb{B})\hookrightarrow\mathcal{D}(\underline{\Delta^{k}}_{\min})\hookrightarrow \mathcal{H}^{s+2k,\gamma+2k-\delta}_p(\mathbb{B})
$$
for all $\delta>0$, $\underline{Q}_{\Delta^{k},\gamma}\subseteq Q_{\Delta^{k},\gamma}$ and $\underline{\mathcal{F}}_{\rho}\subseteq\mathcal{F}_{\rho}$ according to \eqref{da}. Then, for each $T>0$, for the unique solution of the problem \eqref{e1}-\eqref{e2} on $[0,T)\times \mathbb{B}$ obtained by Theorem \ref{t1} we have that
$$
 u\in C([0,T];\mathcal{H}^{s+2k-\frac{2}{q}-\varepsilon,\gamma+2k-\frac{2}{q}-\varepsilon}_{p}(\mathbb{B})+\bigoplus_{\rho\in \underline{Q}_{\Delta^{k},\gamma}}\mathcal{F}_{\rho}).
$$
\end{corollary}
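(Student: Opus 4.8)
The plan is to combine the maximal $L^{q}$-regularity solution from Theorem \ref{t1} with the abstract embedding \eqref{embmaxreg} and then translate the resulting abstract interpolation space into concrete Mellin-Sobolev-plus-asymptotics spaces via Theorem \ref{interembed} (in the form given by Remark \ref{Rmrk}). First, I would observe that since $f\in L^{q}(0,\infty;\mathcal{H}^{s+2(k-1),\gamma+2(k-1)}_{p}(\mathbb{B}))$ and $\mathcal{H}^{s+2(k-1),\gamma+2(k-1)}_{p}(\mathbb{B})\hookrightarrow\mathcal{D}(\underline{\Delta^{k-1}}_{\min})\hookrightarrow\mathcal{D}(\underline{\Delta}^{k-1})$, the right-hand side lies in $L^{q}(0,\infty;\mathcal{D}(\underline{\Delta}^{k-1}))$; similarly, the hypothesis on $u_{0}$ together with the first embedding of Corollary \ref{cp} (applied with $z$ so that $\mu\Re(z)=2k$, i.e. using the structure \eqref{dak} for $\underline{\Delta}^{k}$) and reiteration places $u_{0}$ in $(\mathcal{D}(\underline{\Delta}^{k}),\mathcal{D}(\underline{\Delta}^{k-1}))_{\frac{1}{q},q}$. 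Hence Theorem \ref{t1} applies and yields the unique $u\in W^{1,q}(0,T;\mathcal{D}(\underline{\Delta}^{k-1}))\cap L^{q}(0,T;\mathcal{D}(\underline{\Delta}^{k}))$.

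Next, I would apply \eqref{embmaxreg} with $X_{1}=\mathcal{D}(\underline{\Delta}^{k})$ and $X_{0}=\mathcal{D}(\underline{\Delta}^{k-1})$, which gives
$$
u\in C\big([0,T];(\mathcal{D}(\underline{\Delta}^{k}),\mathcal{D}(\underline{\Delta}^{k-1}))_{\frac{1}{q},q}\big).
$$
It then remains to identify this trace space up to the advertised $\varepsilon$-loss. The strategy is to rescale: since $\underline{\Delta}^{k}$ is a closed extension of the $\mathbb{B}$-elliptic cone operator $\Delta^{k}$ of order $2k$, I apply Theorem \ref{interembed} (via Remark \ref{Rmrk}, replacing $\mathcal{H}^{s+\mu,\gamma+\mu}_{p}$ by $\mathcal{D}(\underline{\Delta^{k}}_{\min})$) with the roles $s\rightsquigarrow s+2(k-1)$, $\gamma\rightsquigarrow\gamma+2(k-1)$, $\mu=2$ applied to the operator $\Delta$ regarded from $\mathcal{D}(\underline{\Delta}^{k})$ to $\mathcal{D}(\underline{\Delta}^{k-1})$, and $\theta=1-\frac{1}{q}$. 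Using reiteration \cite[Corollary 7.3]{Haase} to reduce the couple $(\mathcal{D}(\underline{\Delta}^{k}),\mathcal{D}(\underline{\Delta}^{k-1}))_{\frac{1}{q},q}$ to an interpolation of the base couple, and \cite[Lemma 5.2]{RS2} for the Mellin-Sobolev part, the computation $\theta\mu=2-\frac{2}{q}$ applied on top of the shift by $2(k-1)$ produces the spatial smoothness and weight indices $s+2k-\frac{2}{q}\pm\varepsilon$ and $\gamma+2k-\frac{2}{q}\pm\varepsilon$, while the asymptotics component stays inside $\bigoplus\underline{\mathcal{F}}_{\rho,l}$ with $l$ ranging over $\{0,\dots,\max_{m\in\underline{I}_{\Delta^{k},\rho}}(m)\}$, exactly as in the conclusion of Theorem \ref{interembed}. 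Absorbing the various $\varepsilon$'s and $\delta$'s into a single $\varepsilon>0$ gives the stated embedding.

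The main obstacle I anticipate is the bookkeeping in the rescaling step: Theorem \ref{interembed} is stated for a single cone operator $A$ of order $\mu$ acting between $\mathcal{H}^{s,\gamma}_{p}$ and $\mathcal{H}^{s+\mu,\gamma+\mu}_{p}$, whereas here I want it with $\mathcal{D}(\underline{\Delta^{k}}_{\min})$ and $\mathcal{D}(\underline{\Delta}^{k})$ playing the roles of the base and top spaces, and I need the asymptotics set $\underline{Q}_{\Delta^{k},\gamma}$, $\underline{I}_{\Delta^{k},\rho}$ attached to $\Delta^{k}$ (not to $\Delta$) to thread through unchanged. This is legitimate because $\Delta^{k}$ is itself $\mathbb{B}$-elliptic and \eqref{dak} describes $\mathcal{D}(\underline{\Delta}^{k})$ in precisely the form to which Theorem \ref{interembed} and Remark \ref{Rmrk} apply; but one must check that applying the order-$2$ operator $\Delta$ on top of this scale does not create spurious asymptotics terms and that the $\eta\in\mathbb{N}$ shift appearing in the proof of Theorem \ref{interembed} is harmless after it is absorbed into $\varepsilon$. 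A secondary, purely technical point is verifying that the interpolation couple is admissible, i.e. that $\mathcal{D}(\underline{\Delta}^{k})\overset{d}{\hookrightarrow}\mathcal{D}(\underline{\Delta}^{k-1})$, which follows from density of $C_{0}^{\infty}(\mathbb{B}^{\circ})$ and the closedness of the realizations.
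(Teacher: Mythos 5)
Your overall strategy is the same as the paper's: the paper proves this corollary in one line, by combining Theorem \ref{t1} with the trace embedding \eqref{embmaxreg}, the reiteration result \cite[Corollary 7.3]{Haase}, \cite[Lemma 5.2]{RS2} and Remark \ref{Rmrk}, and your plan invokes exactly these ingredients. However, two of your intermediate steps are not justified as written. First, the membership $u_{0}\in(\mathcal{D}(\underline{\Delta}^{k}),\mathcal{D}(\underline{\Delta}^{k-1}))_{\frac{1}{q},q}$ cannot be obtained from Corollary \ref{cp} ``with $\mu\Re(z)=2k$'': that corollary requires $0<\Re(z)<k$, so $\Re(z)=k$ is out of range, and in any case it describes domains of complex powers, not real interpolation spaces. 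The legitimate route is reiteration, which identifies the trace space with $(\mathcal{H}^{s,\gamma}_{p}(\mathbb{B}),\mathcal{D}(\underline{\Delta}^{k}))_{1-\frac{1}{kq},q}$, followed by the first embedding of Theorem \ref{interembed} via Remark \ref{Rmrk} for the order-$2k$ operator $\Delta^{k}$; running those numbers, the Mellin--Sobolev part of $u_{0}$ must have smoothness and weight $s+2k-\frac{2}{q}+\varepsilon$, $\gamma+2k-\frac{2}{q}+\varepsilon$. The exponent $2k-\frac{2k}{q}+\varepsilon$ in the hypothesis coincides with this only when $k=1$; for $k\geq2$ it is strictly smaller, so your assertion that the stated hypothesis ``places $u_{0}$ in the trace space'' does not follow from the embeddings you cite (note the conclusion's index $2k-\frac{2}{q}-\varepsilon$, which is the one consistent with the trace space; your write-up silently conflates the two exponents instead of addressing the discrepancy).

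Second, the identification of the trace space itself: Theorem \ref{interembed} (even in the form of Remark \ref{Rmrk}) requires the base space of the interpolation couple to be a Mellin--Sobolev space $\mathcal{H}^{s,\gamma}_{p}(\mathbb{B})$, so it cannot be applied ``with $\mu=2$'' to $\Delta$ regarded from $\mathcal{D}(\underline{\Delta}^{k})$ to $\mathcal{D}(\underline{\Delta}^{k-1})$, since $\mathcal{D}(\underline{\Delta}^{k-1})$ contains asymptotics spaces and is not of that form; you flag this as the ``main obstacle'' but do not resolve it. The resolution, and what the paper's citations indicate, is to put reiteration first: reduce $(\mathcal{D}(\underline{\Delta}^{k}),\mathcal{D}(\underline{\Delta}^{k-1}))_{\frac{1}{q},q}$ to $(\mathcal{H}^{s,\gamma}_{p}(\mathbb{B}),\mathcal{D}(\underline{\Delta}^{k}))_{\theta,q}$ with $\theta=1-\frac{1}{kq}$, and only then apply Remark \ref{Rmrk}/Theorem \ref{interembed} to $\Delta^{k}$ with $\mu=2k$, together with \eqref{dak} and the two-sided description of $\mathcal{D}(\underline{\Delta^{k}}_{\min})$; this yields $2k\theta=2k-\frac{2}{q}$ and the asymptotics part $\bigoplus_{\rho,\, l\leq\max_{m\in\underline{I}_{\Delta^{k},\rho}}(m)}\mathcal{F}_{\rho,l}$ directly, with no need to track the action of an order-$2$ operator on the shifted scale. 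Your final indices agree with this computation, but only because the two bookkeeping schemes happen to coincide numerically; as presented, the argument rests on an application of Theorem \ref{interembed} outside its hypotheses.
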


From the above result we deduce that the more regularity we have for $f$ and $u_{0}$ the better information we obtain concerning the asymptotic behavior of the solution $u$ of \eqref{e1} close to $\{0\}\times\partial\mathcal{B}$. In the case of homogeneous heat equation, i.e. when $f=0$, we can recover the complete asymptotic expansion of the solution in terms of the local geometry around the singularities, as we can see from the following result.

\begin{theorem}
Let $s\geq0$, $p,q\in(1,\infty)$, $\gamma$ be chosen as in \eqref{choicegg}, $f=0$ and $u_{0}\in \mathcal{H}_{p}^{s,\gamma}(\mathbb{B})$. Then, there exists a unique 
$$
u\in C^{\infty}((0,\infty); \mathcal{H}_{p}^{s,\gamma}(\mathbb{B}))\cap C([0,\infty); \mathcal{H}_{p}^{s,\gamma}(\mathbb{B}))\cap C((0,\infty); \mathcal{D}(\underline{\Delta}))
$$ 
solving the problem \eqref{e1}-\eqref{e2} on $\mathbb{B}$, where $\underline{\Delta}$ denotes the realization \eqref{DD}. Furthermore, for any $k\in\mathbb{N}$ we have that
$$
u\in C^{\infty}((0,\infty); \mathcal{D}(\underline{\Delta}^{k})).
$$ 
\end{theorem}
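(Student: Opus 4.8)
The plan is to combine the standard semigroup smoothing for analytic semigroups with the bootstrapping through the power scale provided by Theorem~\ref{t1}. First I would invoke Theorem~\ref{RsecD} to fix some $c>0$ so that $c-\underline{\Delta}$ is $R$-sectorial of angle $\theta$ for every $\theta\in[0,\pi)$; in particular $-\underline{\Delta}$ generates a bounded analytic semigroup $e^{t\underline{\Delta}}$ on $\mathcal{H}_{p}^{s,\gamma}(\mathbb{B})$ (modulo the shift by $c$, which produces an exponentially bounded analytic semigroup). Setting $u(t)=e^{t\underline{\Delta}}u_{0}$ gives the unique mild solution of \eqref{e1}--\eqref{e2} with $f=0$, and the continuity statements $u\in C([0,\infty);\mathcal{H}_{p}^{s,\gamma}(\mathbb{B}))$ and, for $t>0$, $u(t)\in\mathcal{D}(\underline{\Delta})$ with $u\in C^{\infty}((0,\infty);\mathcal{H}_{p}^{s,\gamma}(\mathbb{B}))$ are the classical regularizing properties of analytic semigroups (see e.g. \cite[Section III.4.6]{Am}); uniqueness among solutions in that regularity class is standard for generators of analytic semigroups.

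The new point is the improved regularity $u\in C^{\infty}((\varepsilon,\infty);\mathcal{D}(\underline{\Delta}^{k}))$ for every $k$. For this I would argue by induction on $k$, invoking Theorem~\ref{t1} (equivalently Lemma~\ref{l1} together with Theorem~\ref{KW}): the semigroup $e^{t\underline{\Delta}}$ restricts, for each $k$, to an analytic semigroup on the Banach space $\mathcal{D}(\underline{\Delta}^{k})$, because by Lemma~\ref{l1} the part of $\underline{\Delta}$ in $\mathcal{D}(\underline{\Delta}^{k-1})$ — viewed with domain $\mathcal{D}(\underline{\Delta}^{k})$ — is again $R$-sectorial of the same angle, hence generates a bounded analytic semigroup on $\mathcal{D}(\underline{\Delta}^{k-1})$ whose domain is $\mathcal{D}(\underline{\Delta}^{k})$. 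Applying the analytic-semigroup smoothing on the space $\mathcal{D}(\underline{\Delta}^{k-1})$: if $u(\varepsilon/2)\in\mathcal{D}(\underline{\Delta}^{k-1})$ (which holds by the inductive hypothesis, for arbitrarily small $\varepsilon$), then $u(t)=e^{(t-\varepsilon/2)\underline{\Delta}}u(\varepsilon/2)\in\mathcal{D}(\underline{\Delta}^{k})$ for all $t>\varepsilon/2$, with $C^{\infty}$ dependence on $t$ in the $\mathcal{D}(\underline{\Delta}^{k})$-norm. Since $\varepsilon>0$ is arbitrary this gives $u\in C^{\infty}((\varepsilon,\infty);\mathcal{D}(\underline{\Delta}^{k}))$ for every $\varepsilon>0$ and every $k$, completing the induction.

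The main obstacle — or rather the point requiring care — is making rigorous that $e^{t\underline{\Delta}}$ genuinely restricts to an analytic semigroup on each $\mathcal{D}(\underline{\Delta}^{k})$ with the part of $\underline{\Delta}$ as generator, and that the scale of these restrictions is compatible so that a single function $u$ lies in all of them. This follows from the fact (used already in Lemma~\ref{l1}) that the resolvent $(\underline{\Delta}+\lambda)^{-1}$ restricts to $(\underline{\Delta}_{k}+\lambda)^{-1}$ on $\mathcal{D}(\underline{\Delta}^{k-1})$ and commutes with $\underline{\Delta}^{k-1}$, so the Dunford integral defining $e^{t\underline{\Delta}}$ restricts accordingly and $\underline{\Delta}^{k-1}e^{t\underline{\Delta}}=e^{t\underline{\Delta}}\underline{\Delta}^{k-1}$ on $\mathcal{D}(\underline{\Delta}^{k-1})$; one then only needs the standard estimate $\|\underline{\Delta}\,e^{t\underline{\Delta}}\|_{\mathcal{L}(\mathcal{D}(\underline{\Delta}^{k-1}))}\lesssim t^{-1}$ coming from $R$-sectoriality of $\underline{\Delta}_{k}$ on that space. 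Everything else is bookkeeping with the chain of inclusions $\mathcal{D}(\underline{\Delta}^{k})\hookrightarrow\mathcal{D}(\underline{\Delta}^{k-1})\hookrightarrow\cdots\hookrightarrow\mathcal{H}_{p}^{s,\gamma}(\mathbb{B})$.
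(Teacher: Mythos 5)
Your proposal is correct and follows essentially the same route as the paper: shift by $c$ using Theorem \ref{RsecD}, use analytic-semigroup smoothing for existence, uniqueness and the base regularity, then bootstrap through the power scale using that the restriction of $c-\underline{\Delta}$ to $\mathcal{D}(\underline{\Delta}^{k-1})$ with domain $\mathcal{D}(\underline{\Delta}^{k})$ is again sectorial. The only cosmetic difference is that the paper implements the bootstrap by restarting the Cauchy problem at a time $\tau>0$ with data $v(\tau)\in\mathcal{D}(\underline{\Delta})$ and invoking sectoriality of the part of the operator (via \cite[Lemma V.1.2.3]{Am}) together with uniqueness, whereas you restrict the semigroup directly via Lemma \ref{l1} (and your appeal to Theorem \ref{KW}/Theorem \ref{t1} is unnecessary, since only sectoriality, not maximal $L^{q}$-regularity, is needed).
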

\begin{proof}
Take $\theta\in(\frac{\pi}{2},\pi)$ and $c>0$ such that $c-\underline{\Delta}\in\mathcal{P}(\theta)$ due to Theorem \ref{RsecD}. Consider the following linear degenerate parabolic problem
\begin{eqnarray}\label{e3}
v'(t)+(c-\underline{\Delta})v(t)&=&0,\,\,\,\,\,\, t>0,\\\label{e4}
v(0)&=&u_{0}.
\end{eqnarray}
From \cite[Corollary 3.3.11]{ABHN}, \cite[Theorem 3.7.11]{ABHN} and \cite[Corollary 3.7.21]{ABHN} the above problem admits a unique solution 
$$
v\in C^{\infty}((0,\infty); \mathcal{H}_{p}^{s,\gamma}(\mathbb{B}))\cap C([0,\infty); \mathcal{H}_{p}^{s,\gamma}(\mathbb{B}))\cap C((0,\infty); \mathcal{D}(\underline{\Delta})).
$$ 

Take any $\tau>0$ and consider the problem
\begin{eqnarray}\label{e5}
w'(t)+(c-\underline{\Delta})w(t)&=&0,\,\,\,\,\,\, t>0,\\\label{e6}
w(0)&=&v(\tau).
\end{eqnarray}
By noting that $v(\tau)\in \mathcal{D}(\underline{\Delta})$, we consider $c-\underline{\Delta}$ in \eqref{e5}-\eqref{e6} as an operator in $\mathcal{D}(\underline{\Delta})$ with domain $\mathcal{D}(\underline{\Delta}^{2})$, which due to \cite[Lemma V.1.2.3]{Am} belongs again to $\mathcal{P}(\theta)$. Therefore, from \cite[Corollary 3.7.21]{ABHN} there exists a unique
$$
w\in C^{\infty}((0,\infty); \mathcal{D}(\underline{\Delta}))\cap C([0,\infty); \mathcal{D}(\underline{\Delta}))\cap C((0,\infty); \mathcal{D}(\underline{\Delta}^{2}))
$$ 
solving \eqref{e5}-\eqref{e6}. By uniqueness, we have that $w(t)=v(t+\tau)$ when $t\geq0$. The result now follows by successively applying the above argument and by setting $v=e^{-ct}u$ to \eqref{e3}-\eqref{e4}. 
\end{proof}

\begin{remark}
The porous medium equation on manifolds with straight conical tips was studied in \cite{RS2}. In \cite[Section 7]{RS2} the equation was considered in sums of higher order Mellin-Sobolev spaces and asymptotics spaces and it was shown existence, uniqueness and maximal $L^q$-regularity of the solution (see \cite[Theorem 7.8]{RS2}). Furthermore, the Cahn-Hilliard equation on manifolds with possibly warped conical tips was considered in \cite{RS}, and similar results were shown in terms of higher order Mellin-Sobolev spaces (see \cite[Theorem 4.6]{RS} and \cite[Theorem 5.9]{RS}). By the embedding \eqref{embmaxreg}, Remark \ref{Rmrk} and \cite[Corollary 7.3]{Haase} combined with \cite[Theorem 7.8]{RS2} and \cite[Theorem 4.6]{RS} we can obtain in each case more precise information concerning the asymptotic behavior of the solutions close to the singularities in terms of the description of the domain of the bi-Laplacian. 
\end{remark}

\end{document}